\documentclass[reqno,a4paper,12pt]{amsart} 

\usepackage{amsmath,amscd,amsfonts,amssymb}
\usepackage{mathrsfs,dsfont}

\usepackage{tikz}
\usetikzlibrary{patterns}
\usepackage{float}
\usepackage{appendix}
\usepackage{caption}
\usepackage{subcaption}
\usepackage{enumerate}

\numberwithin{equation}{section}
\numberwithin{figure}{section}

\def\R{\mathbb{R}}

\def\Z{\mathbb{Z}}

\def\1{\mathds{1}}

\renewcommand\le{\leqslant}
\renewcommand\ge{\geqslant}
\renewcommand\leq{\leqslant}
\renewcommand\geq{\geqslant}

\renewcommand\hat{\widehat}
\renewcommand\Re{\operatorname{Re}}
\renewcommand\Im{\operatorname{Im}}

\newcommand{\supp}{\operatorname{supp}}

\newcommand{\dist}{\operatorname{dist}}

\newcommand{\Ai}{\operatorname{Ai}}

\theoremstyle{plain}
\newtheorem{thm}{Theorem}[section]
\newtheorem{lem}[thm]{Lemma}

\newtheorem{prop}[thm]{Proposition}

\newtheorem*{claim*}{Claim}
\newtheorem*{thm*}{Theorem}

\theoremstyle{definition}

\newtheorem*{definition*}{Definition}
\newtheorem*{remarks*}{Remarks}
\newtheorem*{remark*}{Remark}

\newenvironment{enumerate-math}
{\begin{enumerate}
\addtolength{\itemsep}{5pt}
}
{\end{enumerate}}

\newenvironment{enumerate-text}
{\begin{enumerate}
\addtolength{\itemsep}{5pt}
}
{\end{enumerate}}

\begin{document}

\title[oscillatory integrals and
Fourier frames]{Lower bounds for parameter-dependent oscillatory integrals and Fourier frames on hypersurfaces}

\author{Shilei Fan}
\address{School of Mathematics and Statistics, and Key Lab NAA–MOE, Central China Normal University, Wuhan 430079, China}
\email{slfan@ccnu.edu.cn}

\author{Bochen Liu}
\address{Department of Mathematics \& International Center for Mathematics, Southern University of Science and Technology, Shenzhen 518055, China}
\email{Bochen.Liu1989@gmail.com}

\thanks{Bochen Liu was partially supported by the National Key R\&D Program of China 2024YFA1015400. Shilei Fan was partially supported by the NSFC (grants No.12331004). The authors acknowledge the use of generative AI tools for suggesting the Airy function example in Section 5, constructing the example in Section 6, and checking the proof steps. All examples and proofs have been independently verified by the authors and presented in their own words. The authors bear full responsibility for all content of this work.}
\subjclass[2010]{42B10}
\date{}

\keywords{oscillatory integral, Fourier frame}

\begin{abstract}
We prove lower bounds for Fourier transforms of surface measures
$$|\widehat{\psi\,d\sigma}(\xi)|  \gtrsim_{\psi, K, \sigma, R_0} \sigma(B(p, |\xi|^{-1})), \text{ given }p\in K\Subset\{\psi>0\} ,\ \vec{n}_{p}\parallel \xi,\ |\xi|\ge R_0,$$
on convex hypersurfaces of finite type with at most one vanishing principal curvature
at each point. These bounds are uniform in the choice of $p\in K\Subset \{\psi>0\}$
and match the classical upper bound of Bruna-Nagel-Wainger.
The key new analytic ingredient is a uniform lower bound for parameter-dependent oscillatory integrals on $\mathbb{R}$ with convex phases. We also give convex examples of finite type with two vanishing principal
curvatures for which 
$$\inf_{p\in K}|\widehat{\psi\,d\sigma}(R \vec{n}_p)|\lesssim_N R^{-N},\ \text{ for arbitrary fixed } \{0\}\Subset K\Subset\{\psi>0\}.$$

Though the estimate above has its own interest, our original motivation is to study Fourier frames for surface measures. In particular, we show that, if $\Gamma$ is a closed convex smooth surface in $\R^d, d\geq 2$, of finite type and at most one principal curvature vanishes at each point, then the surface measure on $\Gamma$ does not admit any Fourier frame. This weakens the non-vanishing curvature condition in a previous result of Iosevich, Lai, the second author, and Wyman. On centrally symmetric surfaces of non-vanishing Gaussian curvature, we obtain more domains without Fourier frames. For example, we show that $\{x\in S^{d-1}:0<x_d<\delta\}, d\geq 3$, does not admit any Fourier frame for all $\delta>0$, which generalizes the result of Chen and the second author on the hemisphere.
\end{abstract}
\maketitle

\section{Introduction}
\subsection{Fourier transform of surface measures}
Suppose $\sigma$ is the surface measure on a smooth surface $\Gamma\subset\R^d$, and $\psi\in C_0^\infty(\Gamma)$ is a nonnegative smooth cutoff function such that no pair of points in $\supp\psi$ have parallel normals. If the direction of $\xi$ stays a fixed positive distance from all normal directions over $\supp \psi$, then the Fourier transform decays rapidly:
\begin{equation}
\label{Fourier-fast-decay}	
|\widehat{\psi\,d\sigma}(\xi)|\leq C_N|\xi|^{-N}.
\end{equation}
If this surface has non-vanishing Gaussian curvature everywhere, then the standard stationary phase method gives
\begin{equation}
\label{Fourier-upper-bound}	
|\widehat{\psi\,d\sigma}(\xi)|\lesssim  (1+|\xi|)^{-\frac{d-1}{2}}, \forall\, \xi\in\R^d.
\end{equation}
Both \eqref{Fourier-fast-decay} and \eqref{Fourier-upper-bound} are classical estimates in harmonic analysis, and have been used widely in different areas of mathematics. There are several different proofs. We refer to, for example, \cite{Ste93}\cite{Sogge17}\cite{Wol03} as references. In particular, the proof in \cite{Wol03} relies on the following asymptotic expansion.
\begin{prop}[Proposition 6.4 in \cite{Wol03}]\label{prop-stationary-phase}
Suppose $a\in C_{0}^{\infty}(\R^n)$ is supported in a sufficiently small neighborhood of the origin,  $h\in C^{\infty}(\R^n)$, and assume that $\nabla h(0) = 0$ and the Hessian matrix $H_h(0)$ is invertible. 
Let $s_0$ be the signature of $H_h(0)$, $\Delta := 2^{-n} |\det (H_h(0))|$, and
\[
I(\lambda) := \int e^{-\pi i \lambda h(x)} a(x) \, dx.
\]
Then, as $\lambda\rightarrow+\infty$, for any $N$,
\[
I(\lambda) = e^{-\pi i \lambda h(0)} e^{-\pi i \frac{s_0}{4}} \Delta^{-\frac{1}{2}} 
\lambda^{-\frac{n}{2}} \left( a(0) + \sum_{j=1}^{N} \lambda^{-j} \mathcal{D}_{j, h} a(0) 
+ \mathcal{O}_h(\lambda^{-(N+1)}) \right).
\]
Here $\mathcal{D}_{j,h}$ are certain differential operators of order $2j$, with coefficients depending on $h$, and the implicit constant depends on $h$ and bounds for finitely many derivatives of $a$.
\end{prop}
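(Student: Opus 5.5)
The plan is the standard route: reduce to a pure quadratic phase by the Morse lemma, then expand the amplitude against the explicit Gaussian Fourier transform.

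\emph{Step 1: Morse lemma.} Since $\nabla h(0)=0$ and $H_h(0)$ is invertible, the Morse lemma with parameters gives a smooth diffeomorphism $x=\Phi(y)$ near $0$ with $\Phi(0)=0$ and $D\Phi(0)=I$ (after a linear change absorbed into the constants) such that
\[ h(\Phi(y))=h(0)+\tfrac12\langle H_h(0)y,y\rangle. \]
If $\supp a$ lies in this neighborhood, then
\[ I(\lambda)=e^{-\pi i\lambda h(0)}\int e^{-\pi i\lambda Q(y)}b(y)\,dy, \]
where $Q(y)=\tfrac12\langle H_h(0)y,y\rangle$ and $b=(a\circ\Phi)\,|\det D\Phi|\in C_0^\infty$, with $b(0)=a(0)$. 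Here $b$ and finitely many of its derivatives are controlled by those of $a$, with constants depending on $h$.

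\emph{Step 2: Plancherel.} The Gaussian $e^{-\pi i\lambda Q}$ has an explicit Fourier transform, computed by analytic continuation of real Gaussians:
\[ \widehat{e^{-\pi i\lambda Q}}(\eta)=e^{-\pi i s_0/4}\,\Delta^{-1/2}\lambda^{-n/2}\,e^{\pi i\lambda^{-1}Q^*(\eta)}, \]
where $Q^*$ is the dual quadratic form, a constant multiple of $\langle H_h(0)^{-1}\eta,\eta\rangle$. The normalization $\Delta=2^{-n}|\det H_h(0)|$ is exactly what makes this constant come out as stated; this is a routine diagonalization check. Parseval then gives
\[ \int e^{-\pi i\lambda Q}b=e^{-\pi i s_0/4}\Delta^{-1/2}\lambda^{-n/2}\int e^{\pi i\lambda^{-1}Q^*(\eta)}\,\hat b(\eta)\,d\eta \]
(up to a reflection of $\hat b$, which is harmless).

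\emph{Step 3: Taylor expansion.} Expand
\[ e^{\pi i\lambda^{-1}Q^*(\eta)}=\sum_{j\le N}\frac{(\pi i\lambda^{-1}Q^*(\eta))^j}{j!}+R_N(\eta), \qquad |R_N(\eta)|\lesssim \lambda^{-(N+1)}|\eta|^{2(N+1)}. \]
The $j$-th term, integrated against $\hat b$, equals $\lambda^{-j}$ times a constant-coefficient differential operator of order $2j$ in $Q^*(\partial)$ applied to $b$ at $0$, by Fourier inversion. Since $b=(a\circ\Phi)|\det D\Phi|$, the chain rule rewrites this as $\mathcal{D}_{j,h}a(0)$, an order-$2j$ operator on $a$ whose coefficients depend on $h$ through $\Phi$. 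The $j=0$ term is $b(0)=a(0)$. The remainder is bounded by
\[ \lambda^{-(N+1)}\int|\eta|^{2N+2}|\hat b(\eta)|\,d\eta, \]
which is finite and controlled by finitely many derivatives of $b$, hence of $a$. This gives the $\mathcal{O}_h(\lambda^{-(N+1)})$ term.

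\emph{Main obstacle.} The delicate part is bookkeeping rather than anything conceptual. First, one must verify the exact constant $e^{-\pi i s_0/4}\Delta^{-1/2}$ under the paper's Fourier normalization $e^{-2\pi i x\cdot\xi}$, which is done by diagonalizing $H_h(0)$ and using the one-dimensional Fresnel integral. Second, one must ensure that the Morse-lemma neighborhood ("sufficiently small") and the derivative bounds depend only on $h$.
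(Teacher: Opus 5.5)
Your argument is correct. The paper gives no proof of this proposition and simply quotes it from Wolff's notes; your route is the standard textbook proof of that result: the Morse lemma reduces to the exact phase $h(0)+\tfrac12\langle H_h(0)y,y\rangle$ with $D\Phi(0)=I$, then one uses the explicit Fourier transform of $e^{-\pi i\lambda Q}$ (whose constant $e^{-\pi i s_0/4}\Delta^{-1/2}\lambda^{-n/2}$ matches $\Delta=2^{-n}|\det H_h(0)|$), Parseval, and a Taylor expansion of $e^{\pi i\lambda^{-1}Q^*}$.

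The parenthetical about a ``linear change absorbed into the constants'' is unnecessary, and such a change would in fact spoil $b(0)=a(0)$. The Morse lemma in the form you state already holds with $D\Phi(0)=I$.
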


Proposition \ref{prop-stationary-phase} actually implies that, for every compact $ K\subset \{\psi>0\}$ there exists $R_0>0$ such that 
\begin{equation}\label{approx-equal}|\widehat{\psi\,d\sigma}(\xi)|\approx |\xi|^{-\frac{d-1}{2}}, \forall\,\xi/|\xi|\in G^{\pm}(K), |\xi|>R_0.\end{equation}
Here and throughout this paper, $G$ denotes the Gauss map. Also denote $G^\pm(K):=G(K)\cup(-G(K))$, so the orientation does not matter. If $\Gamma$ is convex, we assume $G(p)$ is the outer normal at $p\in \Gamma$.

To see why \eqref{approx-equal} holds, notice that for every $p\in \Gamma$, by the Morse lemma one can change variables such that $p\mapsto 0$ and a neighborhood of $p$ becomes the graph of
$$x_1^2\pm\cdots\pm x_n^2$$
near the origin. If the Gaussian curvature is not zero at $p$, this change of variables can be taken to be smooth in $p$. So the remainder in Proposition \ref{prop-stationary-phase} is uniform near $p$.

Unfortunately, this stability does not hold if the Gaussian curvature vanishes. For example, although a standard computation (see, e.g., Proposition 3,  Chapter VIII in \cite{Ste93}) shows that
\begin{equation}\label{expansion-x-3}\begin{aligned}\int e^{i\lambda x^3} a(x)dx = &a(0)\int_{-\infty}^{\infty}e^{i\lambda x^3}dx + a'(0)\int_{-\infty}^{\infty}x e^{i\lambda x^3}dx+\cdots\\=& a(0)\cdot \frac{\Gamma(1/3)}{\sqrt{3}}\,\lambda^{-1/3}+O(\lambda^{-2/3}),\quad \lambda\to+\infty, \end{aligned}\end{equation}
 the estimate \eqref{approx-equal} with $\lambda^{-1/3}$ in place of $\lambda^{-\frac{d-1}{2}}$ fails for the graph $(x, x^3)$ on arbitrarily small compact neighborhoods $K$ of the origin. See Section \ref{sec-eg-convex} for a detailed discussion. The idea behind this example is, as $x^3$ is not convex at the origin, the two stationary contributions can cancel for normals $e\in S^1\backslash\{(0, \pm 1)\}$ arbitrarily close to the north pole. So, to generalize \eqref{approx-equal} to more general surfaces, a convexity assumption is necessary. 

We say a smooth surface is of finite type if every tangent line at every point has finite order of contact with the surface. We refer to \cite{BNW88} for equivalent definitions.

On smooth convex surfaces $S$ of finite type, the celebrated work of Bruna-Nagel-Wainger \cite{BNW88} proves the optimal upper bound 
\begin{equation}\label{upper-bound-BNW88}|\widehat{\psi\,d\sigma}(\xi)|\lesssim \sigma(B(p_0, |\xi|^{-1})),\end{equation}
given the outer normal $\vec{n}_{p_0}\parallel \xi$, where $B(p_0,\delta)$ is the cap cut out by the tangent slab at $p_0$ of width $\delta$: 
\begin{equation}\label{def-B}B(p_0,\delta):=\left\{p\in S:\operatorname{dist}(p,T_{p_0}S)<\delta
\right\}=\{p\in S: |\langle p-p_0,\vec{n}_{p_0}\rangle|<\delta\}\end{equation}
contained in a neighborhood of $p_0$ in which no pairs of points share parallel normals.

A very important point on the upper Fourier bound \eqref{upper-bound-BNW88} is that the implicit constant is uniform in $p_0$. In this paper, we wonder if there exists a matching uniform lower bound. As a remark, if a normal direction is fixed, Schulz \cite{Schulz91} obtained an asymptotic expansion of $\widehat{\psi\,d\sigma}(\xi)$ for convex hypersurfaces of finite type, while, as pointed out in Remark 4 in \cite{Schulz91}, the constants and the threshold in the resulting lower bound
may depend on the direction, thus not uniform.

Now we state our results. We first show that, for convex curves of finite type in the plane, the lower Fourier bound matches the upper bound of Bruna-Nagel-Wainger. In fact, we prove a more general theorem on parameter-dependent oscillatory integrals.
\begin{thm}
    \label{thm-reverse}
    Let $\Theta$ be a set of parameters. For each $\theta\in\Theta$, let $a_\theta\in C_0^\infty((-1,1))$ be real-valued with $\inf_\theta a_\theta(0)>0$, and $q_\theta\in C^\infty([-1,1])$ be convex satisfying 
\begin{equation}\label{eq:q-theta}q_\theta(0)=q_\theta'(0)=0, q_\theta(x)\geq c_q |x|^M, \ M\geq 2,\, x\in[-1,1],\end{equation}
with $c_q>0$ and $M\geq 2$ independent of $\theta$. Suppose 
\begin{equation}\label{eq:uniform-in-theta}\max_{0\leq n\leq M+1}\|q_\theta^{(n)}\|_{L^\infty([-1,1])}, \max_{n=0,1}\|a_\theta^{(n)}\|_{L^\infty}\lesssim 1\end{equation}
uniformly over $\theta\in\Theta$. Then there exist $c, \lambda_0>0$, independent of $\theta$, such that
\begin{equation}\label{eq:oned-conclusion}
  \left|\int e^{-i\lambda q_\theta(x)}a_\theta(x) dx\right|
  \ge c|\{x: |q_\theta(x)|\leq \lambda^{-1}\}|, \quad \forall\,\theta\in\Theta,\ \lambda\ge\lambda_0.
\end{equation}
In particular,
\begin{equation}\label{lower-bound-needed}\left|\int e^{-i\lambda q_\theta(x)}a_\theta(x) dx\right|
  \gtrsim \lambda^{-\frac{1}{2}},\ \forall\lambda\geq \lambda_0,\end{equation}
where the implicit constant is independent of $\theta$.
\end{thm}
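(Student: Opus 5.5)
The plan is to reduce \eqref{eq:oned-conclusion} to a lower bound for the Fourier transform of a \emph{monotone} density on $[0,\infty)$, which convexity of $q_\theta$ provides. Fix $\theta$ and drop the subscript. Since $q$ is convex with $q(0)=q'(0)=0$ and $q(x)\ge c_q|x|^M$, $q$ is strictly decreasing on $[-1,0]$ and strictly increasing on $[0,1]$. Hence $\{|q|\le\lambda^{-1}\}=[-x_-(1/\lambda),x_+(1/\lambda)]$, where $x_\pm(u)\ge0$ are the inverse branches, $q(\pm x_\pm(u))=u$. From $q\le \tfrac12\|q''\|_\infty x^2$ and $q\ge c_q|x|^M$ we get
$$
\lambda^{-1/2}\lesssim x_\pm(1/\lambda)\lesssim\lambda^{-1/M}
$$
uniformly in $\theta$. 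So \eqref{lower-bound-needed} follows from \eqref{eq:oned-conclusion}, and it suffices to prove
$$
|I(\lambda)|\gtrsim x_+(1/\lambda)+x_-(1/\lambda).
$$
Substituting $u=q(x)$ on each half-line gives
$$
I(\lambda)=\int_0^{\infty}e^{-i\lambda u}\bigl(a(x_+(u))x_+'(u)+a(-x_-(u))x_-'(u)\bigr)\,du .
$$
Here $x_\pm$ are concave and increasing, because $x_\pm'=1/|q'|$ and $|q'|$ increases away from $0$. So the weight $g:=x_+'+x_-'$ is positive and nonincreasing.

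\textbf{Step 1 (amplitude reduction).} Choose a small $\delta>0$ independent of $\theta$ such that $a\ge \tfrac12\inf_\theta a_\theta(0)$ on $[-\delta,\delta]$; this uses the uniform bound on $a'$. Split $a=a(0)\chi+(a-a(0)\chi)$ with a fixed cutoff $\chi$. On $\{|x|\ge\delta\}$ we have $|q'|\gtrsim_\delta 1$ uniformly, since $q'$ is monotone and $q(\delta)\ge c_q\delta^M$. One integration by parts, using the uniform bounds on $q''$ and $a'$, therefore makes that region contribute $O(\lambda^{-1})$, which is $o(\lambda^{-1/2})$. The near part of $a-a(0)$ is $O(|x|)$, and its contribution is $\lesssim\int_0^{C/\lambda}\bigl(x_+x_+'+x_-x_-'\bigr)du+\lambda^{-1}(\text{boundary terms})$, obtained by integrating by parts in $u$ for $u\ge C/\lambda$. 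I expect this to be $\lesssim x_\pm(C/\lambda)^2+\lambda^{-1}\log\lambda$, which is $o(x_+(1/\lambda)+x_-(1/\lambda))$. Should this crude bound fail, I would instead keep $a$ inside the monotone weight, since $a(\pm x_\pm(u))$ is nearly constant for $u\le\delta'$.

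\textbf{Step 2 (monotone Fourier integral).} For $g\ge0$ nonincreasing (cut off smoothly at $u\sim q(\delta)$, with an $O(\lambda^{-1})$ error), split the imaginary part into half-periods of $\sin(\lambda u)$. Every pair $[2k\pi/\lambda,(2k+2)\pi/\lambda]$ contributes with the same sign, so
$$
\Bigl|\int_0^\infty \sin(\lambda u)g(u)\,du\Bigr|\ \ge\ \int_0^{\pi/\lambda}\sin(\lambda u)\bigl(g(u)-g(u+\pi/\lambda)\bigr)\,du .
$$
It then suffices to show that $g(u)-g(u+\pi/\lambda)\ge c\,g(u)$ on a portion of $[0,\pi/\lambda]$ carrying a fixed fraction of $\int_0^{1/\lambda}g=x_+(1/\lambda)+x_-(1/\lambda)$.

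\textbf{Step 3 (main obstacle: a uniform ``strict sublinearity'' of $x_\pm$).} Step 2 requires, for some $K>1$, $\epsilon>0$ and $u_0$ independent of $\theta$,
$$
x_\pm(Ku)\le (1-\epsilon)K\,x_\pm(u),\qquad 0<u\le u_0 .
$$
Together with concavity, this gives the needed drop of $g$ between scale $u$ and $u+\pi/\lambda$ in an averaged sense. The condition is equivalent to a doubling-type inequality $q(Lx)\ge L^{1+\eta}q(x)$ for $|x|\le x_0$. I plan to prove it by normalization and compactness. Rescale $\tilde q(t)=q(rt)/q(r)$ on $[0,L]$. The $C^{M+1}$ bound together with $q(r)\ge c_qr^M$ controls $r^{M+1}\|q^{(M+1)}\|/q(r)\lesssim r$, so as $r\to0$, $\tilde q$ is uniformly close to a convex polynomial $P$ of degree $\le M$. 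This $P$ satisfies $P(0)=P'(0)=0$ and $P(1)=1$, and hence $P(t)=\sum_{2\le j\le M}c_jt^j$ with $P(L)\ge L^{1+\eta}$ by convexity and nondegeneracy. Nondegeneracy here means $P$ is not linear near $0$, which is ensured by $P'(0)=0$. The strict inequality must hold uniformly over this compact family of polynomials. Verifying this compactness argument carefully — in particular excluding degenerate limits in which $P$ is flat on $[0,1]$ — is the step I expect to be hardest. Once it is in place, combining Steps 1–3 yields $|I(\lambda)|\ge|\Im I(\lambda)|\gtrsim x_+(1/\lambda)+x_-(1/\lambda)$ for $\lambda\ge\lambda_0$, uniformly in $\theta$.
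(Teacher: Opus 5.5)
Your Step 3 lemma is fine. The degenerate limit you worry about cannot occur, since $P(1)=1$, $P'(0)=0$ and convexity force $P'(1)>1$. But it is not what Step 2 needs, and Step 2's own sufficiency criterion is false, so there is a genuine gap between the two steps.

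The weight $\sin(\lambda u)$ vanishes at $u=0$, so a drop of $g$ concentrated near $u=0$ contributes almost nothing. Concavity plus the one-sided inequality $x_\pm(Ku)\le(1-\epsilon)Kx_\pm(u)$ cannot prevent this. Concretely, rescale to $\lambda=1$ and take $x(u)=\min\{\sqrt{u/\eta},1\}$ with $0<\eta\ll1$.

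\begin{itemize}
\item $x$ is concave and satisfies $x(4u)\le 2x(u)$ for all $u>0$, uniformly in $\eta$.
\item $g=x'$ equals $(4\eta u)^{-1/2}$ on $(0,\eta)$ and $0$ afterwards.
\item Hence $g(u)-g(u+\pi)=g(u)$ on $[0,\eta]$, an interval carrying all of $\int_0^1 g=1$, so your criterion is satisfied.
\end{itemize}

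Yet
\[
\int_0^\infty \sin(u)\,g(u)\,du\le\int_0^\eta \frac{u}{2\sqrt{\eta u}}\,du=\frac{\eta}{3}.
\]
The real part is close to $1$ here, so it is the imaginary-part mechanism itself that fails for such a profile.

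In the theorem, this profile is excluded only because $q_\theta$ cannot shoot up just after $x_\pm(1/\lambda)$. That is two-sided information about $q_\theta$ at the specific scale $\delta_{\theta,\lambda}$, and a one-sided inequality valid uniformly over all small scales does not record it.

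The missing idea is to use your compactness at exactly the scale $r=\delta^\pm_{\theta,\lambda}$, where $q_\theta(r)=1/\lambda$ and $\tilde q(t)=\lambda q_\theta(\delta t)$. This is the paper's normalization. One then passes to the limit in the oscillatory integral itself, instead of distilling a single inequality from the limit. The paper argues by contradiction along a sequence $(\theta_j,\lambda_j)$:

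\begin{itemize}
\item The Taylor-remainder bound and norm equivalence on polynomials of degree $\le M$ give a subsequence with $\lambda_j q_{\theta_j}(\delta_j\,\cdot)\to Q$, together with derivatives, on compacts. Here $Q$ is a nonzero convex polynomial with $Q(0)=Q'(0)=0$.
\item For $Q$ the inverse-branch Jacobians $J_\pm$ are strictly decreasing. So the same period-pairing you use in Step 2 yields $\int\sin Q>0$ with no quantitative input needed.
\item The tails outside $[x_{-n},x_n]$ are bounded, uniformly in $j$, by one integration by parts: they are $\lesssim 1/(\lambda_j\delta_j q'_{\theta_j}(\delta_j x_{\pm n}))\to 1/|Q'(x_{\pm n})|$.
\item The same bound disposes of the amplitude, since $a_{\theta_j}(\delta_j x)\to a_{\theta_j}(0)$ uniformly on compacts.
\end{itemize}

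Step 1 needs a similar adjustment. For general $q_\theta$, integration by parts bounds the $a'$-term by $\lambda^{-1}/q'_\theta(x_+(C/\lambda))\le x_+(C/\lambda)/C$, not by $\lambda^{-1}\log\lambda$. That quantity is small relative to $x_+(1/\lambda)$ once Step 3 is iterated with $C$ large.
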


By combining Theorem \ref{thm-reverse} and the stationary phase method, we obtain the desired Fourier lower bound for surfaces on which at most one principal curvature vanishes at each point.
\begin{thm}
    \label{thm-lower-Fourier-bound}
    Suppose $\Gamma$ is a smooth convex surface embedded in $\R^d, d\geq 2$, of finite type, and at most one principal curvature vanishes at each point. Let $\sigma$ be its surface measure and $G^{\pm}$ be as in \eqref{def-G}. Then every
$p_0\in \Gamma$ has a relatively open neighborhood $U$ such that, for every nonnegative $\psi\in C_0^\infty(U)$ and every compact  $K\subset \{\psi>0\}$, there are constants $c, R_0>0$, depending only on the surface,
$K$, and $\psi$, such that
\begin{equation}
  |\widehat{\psi\,d\sigma}(\xi)|
  \ge c\cdot \sigma(B(p, |\xi|^{-1})),
  \quad \forall\,p\in K,\ \vec{n}_{p}\parallel \xi,\ |\xi|\ge R_0.
\end{equation}
In particular,
$$ |\widehat{\psi\,d\sigma}(\xi)|
  \gtrsim |\xi|^{-\frac{d-1}{2}},
  \quad \forall\,\xi/|\xi|\in G^{\pm}(K),\ |\xi|\ge R_0.$$
\end{thm}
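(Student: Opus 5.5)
We would reduce to a product structure: a stationary phase computation in the $d-2$ nondegenerate directions, and Theorem \ref{thm-reverse} in the one possibly degenerate direction.

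\emph{Step 1: local coordinates.} Near $p_0$, write $\Gamma$ as a graph $x_d=\Phi(x')$ over its tangent plane at $p_0$, with $\Phi$ convex. If $\Gamma$ has nonvanishing Gaussian curvature at $p_0$, the claim is \eqref{approx-equal}, which follows from Proposition \ref{prop-stationary-phase} with parameters. Otherwise, exactly one principal curvature vanishes at $p_0$. By continuity, the Hessian of $\Phi$ has $d-2$ eigenvalues bounded below by some $\kappa>0$ on a neighborhood $U$, and one eigenvalue that is small.

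For $p\in K$ with $\xi=\lambda\vec n_p$, subtract the tangent plane at $p$. This gives
$$\widehat{\psi\,d\sigma}(\xi)=e^{i\phi}\int e^{-i\lambda q_p(x')}b(x')\,dx',$$
where $q_p(x')=\Phi(x')-\Phi(x'_p)-\nabla\Phi(x'_p)\cdot(x'-x'_p)$ is convex, vanishes to second order at $x'_p$, and $b>0$ near $x'_p$. Rotate so that the small-eigenvalue direction of the Hessian at $x'_p$ is $x_1$, and write $x'=(x_1,y)$ with $y\in\R^{d-2}$. Since $\partial_y^2q_p\ge\kappa$, the implicit function theorem gives a smooth $y_p(x_1)$ solving $\partial_y q_p(x_1,y)=0$, smooth in $p$.

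\emph{Step 2: stationary phase in $y$.} For each fixed $x_1$, apply Proposition \ref{prop-stationary-phase} in $y$. The phase $y\mapsto q_p(x_1,y)$ has a nondegenerate minimum, with Hessian uniformly controlled in $(x_1,p)$, so the remainder is uniform. This gives
$$\int e^{-i\lambda q_p(x_1,y)}b\,dy=e^{-\pi i (d-2)/4}\lambda^{-\frac{d-2}{2}}e^{-i\lambda Q_p(x_1)}\bigl(a_p(x_1)+O(\lambda^{-1})\bigr),$$
where $Q_p(x_1)=q_p(x_1,y_p(x_1))$ and $a_p(x_1)\approx b(x_1,y_p(x_1))\,\det(\partial_y^2q_p)^{-1/2}$, positive at $x_1=0$ uniformly in $p\in K$.

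The function $Q_p$ is a partial minimum of a convex function, so it is convex. It also satisfies $Q_p(0)=Q_p'(0)=0$. Finite type of $\Gamma$ plus compactness yields $Q_p(x_1)\ge c|x_1|^M$ uniformly, and the derivative bounds in \eqref{eq:uniform-in-theta} are uniform because everything is smooth in $p$.

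\emph{Step 3: apply Theorem \ref{thm-reverse}.} Apply Theorem \ref{thm-reverse} with $\Theta=K$ (after rescaling $x_1$ to $[-1,1]$). This gives
$$\Bigl|\int e^{-i\lambda Q_p}a_p\Bigr|\ge c\,|\{|Q_p|\le\lambda^{-1}\}|\ge c\lambda^{-1/2}.$$
The error term $O(\lambda^{-\frac{d-2}{2}-1})$ is absorbed by this, since $\lambda^{-1/2}\gg\lambda^{-1}$. It remains to check that
$$\sigma(B(p,\lambda^{-1}))\approx|\{q_p<\lambda^{-1}\}|\approx\lambda^{-\frac{d-2}{2}}|\{Q_p\le\lambda^{-1}\}|,$$
using the quadratic growth of $q_p$ in $y$ around $y_p(x_1)$, and convexity to see that the sublevel set is comparable to a product. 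This closes the main bound. The "in particular" statement then follows from $|\{Q_p\le\lambda^{-1}\}|\gtrsim\lambda^{-1/2}$, which holds because $Q_p''\lesssim1$.

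\emph{Main obstacle.} The delicate point is uniformity in Step 2. The $y$-stationary phase must hold uniformly in $x_1$ over the whole support, not just near $0$. Points with large $|x_1|$ must be cut off with a partition of unity: there $\partial_{x_1}Q_p\neq0$, which gives rapid decay $O(\lambda^{-N})$. The amplitude $a_p$ must also be shown to have uniformly bounded first derivative. Finally, one must ensure that the rotation choosing the degenerate direction depends nicely on $p$ even where the small eigenvalue is not actually zero. We would try using the fixed rotation at $p_0$, which works since the $y$-block stays uniformly positive definite on a small $U$.
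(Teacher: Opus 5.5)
Your proposal is correct and is essentially the paper's proof. The shared steps are: fixed coordinates at $p_0$ with the $(d-2)\times(d-2)$ block uniformly positive definite; uniform stationary phase in those variables for each fixed value of the degenerate variable; convexity and the uniform $|x_1|^M$ lower bound for the reduced phase $Q_p$; and then Theorem~\ref{thm-reverse}, with the $O(\lambda^{-(d-2)/2-1})$ error absorbed. The small differences are equivalent reformulations (convexity by partial minimization instead of the paper's Schur-complement formula for $q_\theta''$, and the cap bound via $q_p\ge Q_p+\tfrac{\kappa}{2}|y-y_p(x_1)|^2$ instead of the Morse lemma), and your separate nonvanishing-curvature case is unnecessary since the same argument covers it.
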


We give examples in Section \ref{sec-eg-higher-dimension} with two vanishing principal curvatures for which the localized Fourier transform decays faster than every power along a sequence of normal frequencies. These examples also show that the corresponding
higher-dimensional analogue of Theorem \ref{thm-reverse} fails. Technically, what makes curves different from surfaces is: for nonzero convex polynomials $Q$ on $\R$ with $Q(0)=0$ and $\nabla Q(0)=0$,
$$\Im \int_{-\infty}^\infty e^{iQ(x)}dx>0$$
always holds (see \cite{BNW88}, Section 5, Remark 2), which is crucial to our proof of Theorem \ref{thm-reverse} (see \eqref{lim-positive} below). However, this may not hold with two variables (see \eqref{eq:local-model-zero} below).

\subsection{Fourier frames for surface measures}
Our motivation to study the lower bound of $|\widehat{\psi\,d\sigma}(\xi)|$ is on Fourier frames for surface measures. Suppose $\mu$ is a finite Borel measure on $\R^d$. A discrete set $\Lambda\subset\R^d$ is called a frame
spectrum of $\mu$ if there exist $0<A\leq B<\infty$ such that
$$A\|f\|_{L^2(\mu)}^2\leq \sum_{\lambda\in\Lambda}|\hat{f\,d\mu}(\lambda)|^2\leq B\|f\|_{L^2(\mu)}^2,\ \forall\,f\in L^2(\mu).$$
The notion of Fourier frame was introduced by Duffin and Schaeffer \cite{DS52} in 1952, and has become a powerful tool in applied harmonic analysis. It is shown by Nitzan-Olevskii-Ulanovskii \cite{NOU16} that every measurable subset of the Euclidean space of finite Lebesgue measure admits Fourier frames (with $d\mu=\chi_Edx$). Because of this, discussions on the non-existence of Fourier frames are mainly on singular measures. For related work, we refer to \cite{Lai11}\cite{HLL13}\cite{DL14}\cite{FL18}\cite{LL25+}\cite{KL25+}\cite{DLT26+}.

In dimensions two and higher, a natural class of singular measures is surface measures. In \cite{Lev18}, Lev pointed out that the surface measure on a small spherical cap admits Fourier frames, and then asked about the whole sphere. This question was answered negatively by Iosevich, Lai, Wyman and the second author in \cite{ILLW19}. More generally, they proved that the surface measure on closed smooth convex surfaces with nonvanishing curvature everywhere does not admit any Fourier frame. In contrast, they also proved that the surface measure on the boundary of every polytope admits Fourier frames.

Recently, the non-existence result on the whole sphere was generalized by Chen and the second author \cite{CL25} to every hemisphere. Although their result works on general centrally symmetric surfaces, non-vanishing Gaussian curvature everywhere is still required.

In this paper, a surface $\Gamma$ is a finite union of the images of smooth embeddings $\chi_i$ from open balls $ B(0,1)\subset\R^{d-1}$ to $\R^d$. We say $\Gamma$ is convex if each $\chi_i$ is locally convex. Let $\Gamma_i:=\chi_i(B(0,1))$, $G_i: \Gamma_i\mapsto S^{d-1}$ denote the Gauss map,
\begin{equation}\label{def-G}G_i^\pm(\cdot):=G_i(\cdot\cap\Gamma_i)\cup (-G_i(\cdot\cap\Gamma_i)) \text{ 
and }G^{\pm}(\Gamma):=\cup G_i^\pm(\Gamma_i).\end{equation}
Each $\Gamma_i$ admits its surface measure, whose sum, divided the multiplicity when overlapping, is the surface measure on $\Gamma$. We assume $\Gamma$ has finite surface measure. Then we say that $\Gamma$ admits Fourier frames if its surface measure does. 
Although some results below remain valid for self-intersecting surfaces, for convenience we assume $\Gamma$ is an embedded surface.

We improve previous results from two perspectives.

The first is to allow the Gaussian curvature to vanish.

\begin{thm}\label{thm-main-1}
Suppose $\Gamma$ is a smooth convex surface in $\R^d$ of finite type, at most one principal curvature vanishes at each point, and
\begin{itemize}
    \item $G^{\pm}(\Gamma)=S^1$, if $d=2$, or
    \item $G^{\pm}(\Gamma)$ contains a neighborhood of an equator in $S^{d-1}$, if $d\geq 3$.
\end{itemize}
Then the surface measure on $\Gamma$ does not admit any Fourier frame.
\end{thm}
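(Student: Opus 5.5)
We argue by contradiction, following the strategy of \cite{ILLW19}, and replace the curvature lower bound used there by Theorem \ref{thm-lower-Fourier-bound}. Suppose $\Lambda$ is a frame spectrum for $\sigma$ with bounds $A,B$.

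\textbf{Step 1: a single cap.} By compactness, cover $\Gamma$ by finitely many neighborhoods $U$ given by Theorem \ref{thm-lower-Fourier-bound}. If $d=2$, $G^\pm(\Gamma)=S^1$. If $d\ge3$, $G^\pm(\Gamma)$ contains a neighborhood of an equator. Either way we can pick one such $U$ and a nonnegative $\psi\in C_0^\infty(U)$ with $\psi\not\equiv0$. Testing the upper frame inequality with translates $f(x)=\psi(x)e^{2\pi i\langle x,\eta\rangle}$ gives
\begin{equation*}
\sum_{\lambda\in\Lambda}|\widehat{\psi\,d\sigma}(\lambda-\eta)|^2\le B\|\psi\|_{L^2(\sigma)}^2\quad\text{for all }\eta.
\end{equation*}
By Theorem \ref{thm-lower-Fourier-bound}, $|\widehat{\psi\,d\sigma}(\xi)|\gtrsim|\xi|^{-\frac{d-1}{2}}$ on the two-sided cone $\{\xi/|\xi|\in G^\pm(K),\ |\xi|\ge R_0\}$, with constants uniform in the direction. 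This uniformity is exactly what the finite type, at-most-one-vanishing-curvature hypothesis supplies, replacing the stationary phase argument of \cite{ILLW19}. It yields
\begin{equation*}
\sum_{\lambda\in\Lambda\cap(\eta+\mathcal C)}|\lambda-\eta|^{-(d-1)}\lesssim1
\end{equation*}
uniformly in $\eta$, where $\mathcal C$ is the truncated cone over $G^\pm(K)$. This is a uniform sparsity condition: each dyadic shell $\{2^k\le|\xi-\eta|<2^{k+1}\}$ in the cone contains $\lesssim 2^{k(d-1)}$ points. That is a $(d-1)$-dimensional density, well below the $d$-dimensional density needed to sample.

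\textbf{Step 2: lower frame bound for localized functions.} Choose test functions concentrated on frequencies near a far-away point $\eta$: take $f=g\,e^{2\pi i\langle x,\eta\rangle}$ with $g$ a smooth bump on $\Gamma$ adapted at scale $|\eta|^{-1/2}$ (a wave packet) near a point whose normal is parallel to $\eta$. Then $\widehat{f\,d\sigma}$ is essentially concentrated in a tube around the normal line through $\eta$. The lower bound $A\|f\|^2\le\sum|\widehat{f\,d\sigma}(\lambda)|^2$ then forces many points of $\Lambda$ in a region where Step 1 shows there are few. Concretely, we follow \cite{ILLW19}: average over a family of $\eta$ (or over Knapp caps) and compare the $(d-1)$-dimensional sparsity with the lower frame bound. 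The hypothesis that $G^\pm(\Gamma)$ contains all of $S^1$, or a neighborhood of an equator, ensures the relevant normal directions $\lambda-\eta$ lie in the good cone of some chart for a positive proportion of $\lambda$.

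\textbf{Main obstacle.} The hard part is Step 2 when curvature vanishes. Knapp caps are then anisotropic, with size governed by $\sigma(B(p,\delta))$ rather than $\delta^{(d-1)/2}$, and the upper BNW bound \eqref{upper-bound-BNW88} and the lower bound of Theorem \ref{thm-lower-Fourier-bound} must be matched using the same cap quantity $\sigma(B(p,|\xi|^{-1}))$.

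To handle this, we would first redo the counting with weights $\sigma(B(p,|\xi|^{-1}))^2$ in place of $|\xi|^{-(d-1)}$, using Theorem \ref{thm-lower-Fourier-bound} for the lower side and \eqref{upper-bound-BNW88} for the upper side. These weights are still summable against a $(d-1)$-dimensional count while failing to be summable against any frame-compatible density. For $d\ge3$ we would restrict to directions near the equator where both estimates are uniform, which suffices to reach the contradiction.
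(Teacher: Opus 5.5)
Your Step 1 matches the paper's convergence step, with one correction. You need a finite family of charts $\psi_j$ whose cones cover the closed band $\overline D$, obtained from compactness of $\overline D\subset S^{d-1}$ and openness of the Gauss map ($\Gamma$ itself need not be compact). This gives $\sum_{\lambda\in\Lambda\cap\mathcal C_D}(1+|\lambda|)^{-(d-1)}<\infty$. A single chart cannot suffice, since by Lev's observation small caps do admit frames.

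The genuine gap is Step 2, where the contradiction actually lives. You defer it to \cite{ILLW19}, and the mechanism you sketch is not correct.
\begin{itemize}
\item The lower frame bound never forces $d$-dimensional density. Averaging modulations only gives $\#(\Lambda\cap B_R)\gtrsim R^{d-1}$, and Lev's frames for small caps can be taken inside a hyperplane.
\item So the contradiction is borderline. The per-shell bound $\lesssim 2^{k(d-1)}$ you extract from Step 1 is fully compatible with $R^{d-1}$ growth; you must keep the full convergent sum.
\item For $d\ge3$ the real issue is to guarantee that the points forced by the lower frame bound lie in $\mathcal C_D$. Your ``positive proportion'' claim is asserted, not proved. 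Averaging modulations around the origin does not achieve it, because forced points with $|\lambda|\lesssim\delta R$ may point toward the poles.
\end{itemize}

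The missing idea is an off-center modulation of a fixed \emph{nondegenerate} cap.
\begin{itemize}
\item Since $D\subset G^\pm(\Gamma)$ has interior, Sard's theorem gives $p_0$ with nonzero Gaussian curvature and $\vec n_{p_0}=e_1\in D$. A fixed bump $\phi$ at $p_0$ satisfies $|\widehat{\phi\,d\sigma}(\xi)|\lesssim(1+|\xi|)^{-(d-1)/2}$ and decays rapidly off a narrow cone $\mathcal C_\phi$ around $\pm e_1$.
\item Apply the lower frame bound to $\phi\,e^{-2\pi i x\cdot(\xi^R+\xi)}$, averaged over $|\xi|\le\delta R$. Here $\xi^R=2Re_2$ is transverse to $e_1$ but lies in the equatorial plane.
\item Lemma \ref{geo-lem} places every non-negligible $\lambda$ in $\mathcal N_{2\delta R}(\mathcal C_\phi-\xi^R)\subset\mathcal C_D$.
\item The part with $|\lambda|\ge R$ is small by the convergent tail. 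Then $\int_{B_{4R}}|\widehat{\phi\,d\sigma}|^2\lesssim R$ gives $\#(\Lambda\cap\mathcal C_D\cap B_R)\gtrsim R^{d-1}$, contradicting convergence.
\end{itemize}

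Vanishing curvature never enters this step. Your ``main obstacle'' is therefore an artifact of your route: no anisotropic Knapp caps, no BNW upper bound, and no reweighting are needed. The reweighting claim is also false as stated. Since $\sigma(B(p,|\xi|^{-1}))^2\gtrsim|\xi|^{-(d-1)}$, these weights are not summable against a $(d-1)$-dimensional count: already $\sum_k 2^{k(d-1)}\,2^{-k(d-1)}=\infty$.
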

As an application, if $\Gamma$ is a closed convex smooth surface in $\R^d$ of finite type and at most one principal curvature vanishes at each point of $\Gamma$, then the surface measure on $\Gamma$ does not admit any Fourier frame. This generalizes the result in \cite{ILLW19} to a more general class of surfaces on which Gaussian curvature may vanish. 

It is necessary to state the cases $d=2$ and $d\geq 3$ separately in Theorem \ref{thm-main-1}. For example, one can find a direction along which the projection of a small neighborhood of $\{(\pm 1, 0)\}$ in $S^1$ consists of two separated intervals, thus the arc-length measure on these two arcs admit Fourier frames by Lev's observation \cite{Lev18}.

We next consider smooth halves of centrally symmetric hypersurfaces. If a surface $\Gamma$ is centrally symmetric about the origin, then one can consider half-$\Gamma$s, denoted by $\Gamma/2$, that are smooth surfaces contained in $\Gamma$ whose surface measure satisfies
$$\sigma_{\Gamma/2}+\sigma_{-\Gamma/2}=\sigma_{\Gamma}.$$

\begin{thm}\label{thm-main-2}
	Suppose $\Gamma$ is a smooth centrally symmetric surface in $\R^d, d\geq 3$, of non-vanishing Gaussian curvature, centered at the origin, $0\notin\Gamma$, and $G^{\pm}(\Gamma)$ contains a neighborhood of an equator in $S^{d-1}$. Then the surface measure on every half-$\Gamma$ does not admit any Fourier frame.
\end{thm}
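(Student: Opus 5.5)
The plan follows the strategy of \cite{ILLW19} and \cite{CL25}: play the two frame inequalities against each other, tested on modulated smooth bumps supported in the \emph{interior} of the half-surface. Write $\mu=\sigma_{\Gamma/2}$ and suppose $\Lambda$ is a frame spectrum for $\mu$ with bounds $A,B$.

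\emph{Step 1 (choice of localized pieces).} Since $\sigma_{\Gamma/2}+\sigma_{-\Gamma/2}=\sigma_\Gamma$ and $\Gamma$ is centrally symmetric, with normals at $p$ and $-p$ opposite, we have $G^\pm(\Gamma/2)=G^\pm(\Gamma)$ up to the image of the boundary of $\Gamma/2$. Hence there is an open set $W\subset S^{d-1}$ such that every $e\in W$ is a normal at some interior point of $\Gamma/2$, and $W$ contains a neighborhood of a great sub-sphere of the equator. For each such interior point $p$ fix $\psi_p\in C_0^\infty$ supported in a small interior cap. Because Gaussian curvature never vanishes, Proposition~\ref{prop-stationary-phase} gives \eqref{approx-equal} uniformly:
\[
|\widehat{\psi_p\,d\sigma}(\eta)|\approx|\eta|^{-\frac{d-1}{2}}\quad\text{for }\eta/|\eta|\text{ in a small cone }C_p\text{ around }\pm\vec n_p,\ |\eta|\ge R_0,
\]
together with \eqref{Fourier-fast-decay} outside a slightly larger cone. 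Finitely many caps cover the relevant compact set of normals, so all constants are uniform. A sharp cutoff would create boundary contributions from $\partial(\Gamma/2)$; working only with smooth interior bumps avoids them, and this is the point where the half-surface differs from the full surface.

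\emph{Step 2 (upper frame bound gives sparseness).} Apply the upper bound to $f=\psi_p e^{2\pi i x\cdot\xi}$ for arbitrary $\xi\in\R^d$. This yields
\[
\sum_{\lambda\in\Lambda}|\widehat{\psi_p\,d\sigma}(\lambda-\xi)|^2\le B\|\psi_p\|^2 .
\]
Inserting the lower bound of Step 1 gives, uniformly in $\xi$,
\[
\sum_{\lambda\in\Lambda\cap(\xi+C_p),\,|\lambda-\xi|\ge R_0}|\lambda-\xi|^{-(d-1)}\lesssim 1 .
\]
In particular every ball of radius $1$ contains $O(1)$ points of $\Lambda$. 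More importantly, the portion of $\Lambda$ inside a cone from $\xi$ with direction in $W$ is sparse at scale $|\lambda-\xi|^{d-1}$.

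\emph{Step 3 (lower frame bound gives richness, and the contradiction).} Apply the lower bound to the same $f$. This gives
\[
\sum_\lambda|\widehat{\psi_p\,d\sigma}(\lambda-\xi)|^2\ge A\|\psi_p\|^2 .
\]
By rapid decay off the cone and the $|\cdot|^{-(d-1)/2}$ upper bound inside it, the left side is essentially the sum over $\lambda\in\xi+C_p$, plus an $O(1)$ local term near $\xi$. Choosing $\xi$ far from $\Lambda$ within the cone directions (possible by the local finiteness from Step 2) kills the local term. This leaves
\[
\sum_{\lambda\in\xi+C_p,\ |\lambda-\xi|>R}|\lambda-\xi|^{-(d-1)}\gtrsim 1
\]
for every $\xi$. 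The argument then mimics \cite{ILLW19}: integrate this inequality over $\xi$ in a large ball $B(0,T)$. The left side, after Fubini, counts $\lambda$ with weights $\int_{B(0,T)}|\lambda-\xi|^{-(d-1)}\mathbf 1_{\lambda-\xi\in C_p}\,d\xi$. Using the equator-neighborhood hypothesis, one covers every direction transversal to a fixed $e_d$ by the cones $C_p$. When $d\ge3$, comparing this with the Step 2 bound integrated over the same ball forces $\#(\Lambda\cap B(0,T))$ to grow like $T^d$ from below and like $o(T^d)$ from above, a contradiction. The upper count uses that a cone with opening in $W$ sees $\Lambda$ only at density decaying in the radial variable.

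The hard step is Step 3: making the averaging argument produce incompatible growth rates, and showing that the directions available in $W$ for the half-surface suffice. Here $d\ge3$ is needed, since $W$ must contain the equatorial directions in a way invariant under the antipodal identification. I would first try to reduce to the exact dichotomy used in \cite{CL25} for the hemisphere, replacing the explicit Bessel asymptotics there by the uniform two-sided estimate of Step 1.
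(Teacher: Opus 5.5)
Your Step~1 contains a false claim, and the gap behind it is fatal rather than technical. The normals at interior points of $\Gamma/2$ need not cover a neighborhood of any part of an equator. For the model case $\Gamma=\{x\in S^{d-1}:|x_d|<\delta\}$ with $\Gamma/2=\{0<x_d<\delta\}$, the interior normals form $\{0<|x_d|<\delta\}$, which misses the whole equator $\{x_d=0\}$.

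Worse, every inequality in your Steps~2--3 uses only modulations of finitely many bumps compactly supported in the interior of $\Gamma/2$. These are supported in a cap $\{x\in S^{d-1}:x_d\ge\epsilon\}$ for some $\epsilon>0$. That cap is the graph of $\sqrt{1-|x'|^2}$ over a ball in $[-1,1]^{d-1}$, with surface density $1/x_d\in[1,\epsilon^{-1}]$. By Parseval on $[-1,1]^{d-1}$, the set $\tfrac12\Z^{d-1}\times\{0\}$ is a frame for $\sigma$ restricted to this cap (Lev's observation). Every inequality you use is therefore satisfied by a genuine frame, so no averaging of interior-bump inequalities, however arranged, can yield a contradiction.

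Geometrically, a divergence argument needs the sparse cone to contain a neighborhood of a translated cone, whose directions sweep a full great circle, and every great circle meets $\{x_d=0\}$. Your remark that sharp cutoffs must be avoided is exactly backwards. It is also telling that the hypothesis $0\notin\Gamma$ never enters your argument.

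The missing idea is how to get sparseness of $\Lambda$ on the whole cone $\mathcal{C}_D$ over an equatorial neighborhood $D$, using test functions that straddle $\partial(\Gamma/2)$.

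\begin{enumerate}
\item Take smooth bumps $\psi_j$ on the full surface $\Gamma$ with $G^\pm(\{\psi_j>0\})$ covering $D$. Apply the upper frame bound for $\sigma_{\Gamma/2}$ to $f=e^{-2\pi i x\cdot\xi}\tilde\psi_j$, where $\tilde\psi_j(x)=\psi_j(x)+\psi_j(-x)$ is restricted to $\Gamma/2$. This is a sharp cutoff, but only $f\in L^2$ is needed.
\item Central symmetry gives $\Re\widehat{\tilde\psi_j\,d\sigma_{\Gamma/2}}=\Re\widehat{\psi_j\,d\sigma_\Gamma}$. So the cut disappears in the real part, and $|\widehat{\tilde\psi_j\,d\sigma_{\Gamma/2}}|\ge|\Re\widehat{\psi_j\,d\sigma_\Gamma}|$.
\item Stationary phase on the smooth full surface gives $\Re\widehat{\psi_j\,d\sigma_\Gamma}(R\theta+\eta)=R^{-(d-1)/2}A(\theta)\cos(\beta-2\pi p_\theta\cdot\eta)+O(R^{-(d+1)/2})$, with $p_\theta$ the stationary point. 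This can vanish pointwise, so one averages over $|\eta|\le1$. The bound $\inf_{p\in K,\beta\in\R}\int_{|\eta|\le1}\cos^2(\beta-2\pi p\cdot\eta)\,d\eta>0$ holds precisely because $0\notin K$. This is where $0\notin\Gamma$ is used.
\item The result is $\sum_{\lambda\in\Lambda\cap\mathcal{C}_D}(1+|\lambda|)^{-(d-1)}<\infty$.
\end{enumerate}

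The contradiction then comes from a single interior piece $\phi$ of nonvanishing curvature with normals near $e_1$. Average the lower frame bound over $\xi\in B_{\delta R}$ after a shift by $\xi^R=2Re_2$, chosen so that the $2\delta R$-neighborhood of $\mathcal{C}_\phi-\xi^R$ lies in $\mathcal{C}_D$. This forces $\#(\Lambda\cap\mathcal{C}_D\cap B_R)\gtrsim R^{d-1}$, contradicting the convergence above.

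So the relevant dichotomy is at scale $R^{d-1}$. Your proposed $T^d$ lower count cannot follow from the lower frame bound, since the cap frame above has counting function of order $T^{d-1}$. Likewise, choosing $\xi$ at large distance from $\Lambda$ is not justified by local finiteness.
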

As an example, $\{x\in S^{d-1}:0<x_d<\delta\}, d\geq 3$, does not admit any Fourier frame for every $\delta>0$, which generalizes the result in \cite{CL25} on the hemisphere.

Theorems \ref{thm-main-1}, \ref{thm-main-2} are both corollaries of the following general theorem on Fourier-analytic properties of $\Gamma$.
\begin{thm}\label{thm-main-3}
	Suppose $\Gamma$ is a smooth surface in $\R^d$ whose Gauss map is an open mapping, and the following two conditions hold:
    \begin{enumerate}[(i)]
        \item \begin{itemize}
        	\item $G^{\pm}(\Gamma)=S^1$ when $d=2$, or 
        	\item $G^{\pm}(\Gamma)$ contains a neighborhood of an equator in $S^{d-1}$, when $d\geq 3$.
        \end{itemize}
        \item for every point $p\in\Gamma$, there exists a nonnegative $\psi\in C_0^\infty(\Gamma)$ with $\psi(p)=1$ such that, for every compact $ K\subset \{\psi>0\}$ there  exists $C, R_0>0$ such that
        \begin{equation}\label{goal-lower-bound}\int_{|\eta|\leq C}|\widehat{\psi\,d\sigma}(\xi+\eta)|^2d\eta \gtrsim_{\Gamma, \psi, C, R_0}  |\xi|^{-(d-1)}, \forall\, \xi/|\xi|\in G^{\pm}(K), |\xi|>R_0.\end{equation}
    \end{enumerate}
     Then the surface measure on $\Gamma$ does not admit any Fourier frame.
    
    Moreover, if $\Gamma$ is centrally symmetric about the origin, $0\notin\Gamma$, satisfying $(i)$, and (ii) holds with $\Re \widehat{\psi\,d\sigma}$ in place of $\widehat{\psi\,d\sigma}$, namely 
    \begin{equation}\label{lower-bound-Re}\int_{|\eta|\leq C}|\Re \widehat{\psi\,d\sigma}(\xi+\eta)|^2d\eta \gtrsim_{\Gamma, \psi, C, R_0}  |\xi|^{-(d-1)}, \forall\, \xi/|\xi|\in G^{\pm}(K), |\xi|>R_0,\end{equation}
    then every half-$\Gamma$ does not admit any Fourier frame.
\end{thm}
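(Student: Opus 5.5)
The approach is to argue by contradiction, in the spirit of \cite{ILLW19}. Suppose $\Lambda$ is a frame spectrum for $\sigma$ with bounds $A\le B$. The first step extracts a density constraint from the upper frame bound. Test the upper bound on $f=e^{2\pi i\langle x,\eta\rangle}\psi$ and average in $\eta$ over a ball; this gives a Bessel-type counting bound $\#(\Lambda\cap B(\xi,1))\lesssim 1$ uniformly in $\xi$. Choose $\psi$ so that the hypothesis (ii) holds. Since $\widehat{\psi\,d\sigma}$ is smooth and has bounded derivatives, the upper bound can also be combined with the lower bound \eqref{goal-lower-bound} in a localized form. Testing the upper frame inequality against modulated bumps then gives
\[
\sum_{\lambda\in\Lambda}\int_{|\eta|\le C}|\widehat{\psi\,d\sigma}(\lambda-\xi+\eta)|^2\,d\eta\lesssim B
\]
uniformly in $\xi$.

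Next I sum over $\lambda$ lying in the cone $\Omega:=\{\xi:\xi/|\xi|\in G^\pm(K)\}$. Every $\lambda\in\Lambda\cap\Omega$ with $|\lambda|>R_0$ contributes at least $c|\lambda|^{-(d-1)}$. Hence
\[
\sum_{\lambda\in\Lambda\cap\Omega,\,|\lambda|>R_0}|\lambda|^{-(d-1)}<\infty .
\]
So $\Lambda$ is sparse in cones of normal directions: the number of points of $\Lambda$ in $\Omega\cap\{|\xi|\approx 2^k\}$ is $o(2^{k(d-1)})$. Since $G$ is open and $K$ can be chosen around any point, condition (i) lets us cover a neighbourhood of an equatorial band of directions (all of $S^1$ when $d=2$) by finitely many such cones.

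The third step contradicts the lower frame bound. I take $f$ to be a function on $\Gamma$ whose Fourier transform concentrates away from the bad region. The natural choice is $f=e^{-2\pi i\langle x,\xi_0\rangle}\phi$, where $\phi$ is a cutoff supported near points whose normals are nearly orthogonal to a direction $e$. One then shows that $\sum_\lambda|\widehat{f\,d\sigma}(\lambda)|^2$ can be made small compared with $\|f\|^2$. Off the cone $\Omega$, \eqref{Fourier-fast-decay} gives rapid decay. Inside the cone, the sparsity estimate above, combined with the counting bound and the uniform decay $|\widehat{\phi\,d\sigma}|\lesssim 1$, gives only a small contribution after translating by a suitably chosen large $\xi_0$. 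Concretely, I would pick $\xi_0=2^k e$ along a dyadic sequence where the $\Lambda$-count in the relevant annular sector is $o(2^{k(d-1)})$. I would also use that $\widehat{\phi\,d\sigma}(\cdot-\xi_0)$ has $L^2$ mass spread over a region of volume $\sim 2^{k(d-1)}$, while it is of size at most $2^{-k(d-1)/2}$ there. The condition that $G^\pm$ contains an equator neighbourhood is exactly what ensures that this spread region lies inside the controlled cones; this is why $d=2$ and $d\ge3$ are stated separately. This step is the main obstacle: getting the geometry of the translated Fourier mass to sit inside the sparse cones uniformly, and doing the bookkeeping so that $A\|f\|^2$ is violated.

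For the centrally symmetric half-$\Gamma$ statement the same scheme applies to $\sigma_{\Gamma/2}$. The key identity is $\widehat{\sigma_{\Gamma/2}}+\widehat{\sigma_{-\Gamma/2}}=\widehat{\sigma_\Gamma}$, and reflection gives $\widehat{\sigma_{-\Gamma/2}}=\overline{\widehat{\sigma_{\Gamma/2}}}$. Hence $2\Re\widehat{\psi\,d\sigma_{\Gamma/2}}=\widehat{\tilde\psi\,d\sigma_\Gamma}$ for the symmetrized cutoff $\tilde\psi$. The density argument for a frame of $\sigma_{\Gamma/2}$ then only sees $|\Re\widehat{\cdot}|\le|\widehat{\cdot}|$. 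Hypothesis \eqref{lower-bound-Re} therefore supplies the lower bound needed in the second step, and the rest runs verbatim.
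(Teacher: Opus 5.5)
Your first two steps are the paper's convergence half. The upper frame bound and (ii), after covering the equatorial band by finitely many cones, give $\sum_{\lambda\in\Lambda\cap\mathcal{C}_D}(1+|\lambda|)^{-(d-1)}<\infty$, where $\mathcal{C}_D$ is the double cone over the band. The gap is in the third step, and it is not just bookkeeping: a single translate $f=e^{-2\pi i\langle x,\xi_0\rangle}\phi$ with $\xi_0=2^ke$ cannot make $\sum_\lambda|\widehat{f\,d\sigma}(\lambda)|^2/\|f\|^2$ small. Since $\widehat{f\,d\sigma}(\lambda)=\widehat{\phi\,d\sigma}(\lambda+\xi_0)$, any $\lambda\in\Lambda$ very close to $-\xi_0$ contributes about $|\widehat{\phi\,d\sigma}(0)|^2=(\int\phi\,d\sigma)^2$. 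Here $-\xi_0$ is the vertex of the translated cone, and this contribution is a fixed fraction of $\|f\|^2$. Your claim that the relevant values have size at most $2^{-k(d-1)/2}$ is false there. Nothing you have shown excludes such $\lambda$. The counting bound and the sparsity $o(2^{k(d-1)})$ both allow $\Lambda$ to have a point very close to $-2^ke$ for every $k$, since that adds only $O(k)$ points up to radius $2^k$. For such $\Lambda$ your ratio stays bounded below along the whole dyadic sequence. Likewise, \eqref{Fourier-fast-decay} concerns the normal cone $\mathcal{C}_\phi$ of $\supp\phi$, not $\Omega$. It gives smallness only at frequencies at distance $\gtrsim R$ from $\mathcal{C}_\phi$, not merely outside it.

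The missing idea is to average the lower frame inequality over a ball of translates, as the paper does. Take $\phi$ supported near a point $p_0$ with normal $e_1$, and set $\xi^R=2Re_2$. Then
\[
1\lesssim R^{-d}\int_{B_{\delta R}}\sum_{\lambda\in\Lambda}\bigl|\widehat{\phi\,d\sigma}(\lambda+\xi^R+\xi)\bigr|^2\,d\xi .
\]
If $\lambda\notin\mathcal{N}_{2\delta R}(\mathcal{C}_\phi-\xi^R)$, then $\dist(\lambda+\xi^R+\xi,\mathcal{C}_\phi)\ge\delta R$, so fast decay makes these terms negligible. The remaining $\lambda$ lie in $\mathcal{C}_D$ by Lemma~\ref{geo-lem}, which is the containment you left open; it holds because $e_1,e_2\in\{x_d=0\}$. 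The $\lambda$ with $|\lambda|\ge R$ contribute $\lesssim|\lambda|^{-(d-1)}$ each, a tail of the convergent series. Each $\lambda$ with $|\lambda|\le R$ contributes at most $R^{-d}\int_{B_{4R}}|\widehat{\phi\,d\sigma}|^2\lesssim R^{-(d-1)}$, wherever it sits. Hence $\#(\Lambda\cap\mathcal{C}_D\cap B_R)\gtrsim R^{d-1}$, contradicting convergence.

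You also tacitly use $|\widehat{\phi\,d\sigma}(\xi)|\lesssim(1+|\xi|)^{-(d-1)/2}$, which the hypotheses do not give you. The paper picks $p_0$ with nonzero Gaussian curvature and normal in the band via Sard's theorem: the band has positive measure, so it contains regular values of $G$. Without curvature the tail need not be summable; polytopes do admit frames. For half-$\Gamma$, the right identity is $\Re\widehat{\tilde\psi\,d\sigma_{\Gamma/2}}=\Re\widehat{\psi\,d\sigma_\Gamma}$, with test function $\tilde\psi=\psi+\psi(-\cdot)$ restricted to $\Gamma/2$. Your version $2\Re\widehat{\psi\,d\sigma_{\Gamma/2}}=\widehat{\tilde\psi\,d\sigma_\Gamma}$ holds only if $\psi$ vanishes on $-\Gamma/2$. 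The divergence step also needs $p_0\in\Gamma/2$.
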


It is easy to conclude that the Gauss map is open when $\Gamma$ has non-vanishing Gaussian curvature or is convex of finite type. Then Theorem  \ref{thm-main-3} and Theorem \ref{thm-lower-Fourier-bound} together immediately imply Theorem \ref{thm-main-1}. 
For Theorem \ref{thm-main-2}, choose $\psi$ supported in a
sufficiently small chart and fix a compact
$K\subset\{\psi>0\}$. By Proposition \ref{prop-stationary-phase},
\[
\Re\widehat{\psi\,d\sigma}(R\theta+\eta)
=R^{-(d-1)/2}A(\theta)
 \cos\bigl(\beta(R,\theta)-2\pi p_\theta\cdot\eta\bigr)
+O(R^{-(d+1)/2}),
\]
uniformly for $\theta\in G^\pm(K)$ and $|\eta|\le1$.
Here $p_\theta\in K$ is the stationary point,
$\beta(R,\theta)\in[0,\pi]$ is real, and $\inf_\theta A(\theta)>0$.
Since $K$ is compact and $0\notin K$, compactness yields
\[
\inf_{\substack{p\in K\\ \beta\in\mathbb R}}
\int_{|\eta|\le1}|\cos(\beta-2\pi p\cdot\eta)|^2\,d\eta>0.
\]
Overall,
\[
\int_{|\eta|\le1}
|\Re\widehat{\psi\,d\sigma}(R\theta+\eta)|^2\,d\eta
\gtrsim R^{-(d-1)}
\]
for all sufficiently large $R$, uniformly in $\theta$.
Thus \eqref{lower-bound-Re} holds, and
Theorem \ref{thm-main-3} implies Theorem \ref{thm-main-2}.

\subsection*{Organization.} The paper is organized as follows. In Section \ref{sec-pf-thm-1.2}, we prove Theorem \ref{thm-reverse}. In Section \ref{sec-pf-lower-Fourier-bound}, we use Theorem \ref{thm-reverse} and the stationary phase method to prove Theorem \ref{thm-lower-Fourier-bound}. In Section \ref{sec-pf-thm-1.6}, we prove Theorem \ref{thm-main-3}. In Section \ref{sec-eg-convex}, we discuss the Fourier transform of the arc length measure on $(x,x^3)$ near $x=0$. In Section \ref{sec-eg-higher-dimension}, we construct and discuss an example with two vanishing principal curvatures.


\subsection*{Notation.}
Throughout this article, we write $A\lesssim B$ if there exists an absolute constant $C>0$ that only depends on the ambient surface and given cutoff functions such that $A\leq CB$.

\section{Proof of Theorem \ref{thm-reverse}}\label{sec-pf-thm-1.2}
Before we start our proof, we would like to remind the readers that,  in \cite{BNW88}, Section 5, Remark 2, it is proved that: if $P(t)=\sum_{j=2}^m a_j t^j$ is a convex polynomial on $[0,\infty)$ which is not identically zero, then
\begin{equation}\label{lower-bound-BNW88}
\left|\int_0^\infty e^{i\lambda P(t)}\,dt\right|\gtrsim
\mu(\lambda,a_2,\ldots,a_m),
\end{equation}
 where $\mu\in(0,\infty)$ is the unique solution to
$$\sum_{j=2}^m |\lambda a_j|\mu^j=1.$$
Although it looks far from Theorem \ref{thm-reverse}, the argument behind it inspires several key steps below.

Now we start our proof.

We first collect some properties of $q_\theta$ that will be used later: by \eqref{eq:q-theta}\eqref{eq:uniform-in-theta} and the convexity, it follows that
\begin{equation}\label{eq:two-sided-height}
c_0|x|^M\le q_\theta(x)\le C_0|x|^2, \ \forall\,x\in(-1,1),
\end{equation}
and, with $q_\theta^+(x):=q_\theta(x), q_\theta^-(x):=q_\theta(-x)$,
\begin{equation}\label{lower-bound-q'}
(q^\pm_\theta)'(x)\geq (q^\pm_\theta)(x)/x \geq c_0 x^{M-1},\ \forall\,x\in(0,1).
\end{equation}

Denote
\[
  I_\theta(\lambda):=\int e^{-i\lambda q_\theta(x)}a_\theta(x)\,dx.
\]
Inspired by Remark after Theorem 5.1, 5.2 in \cite{BNW88}, we consider the imaginary part of $I_\theta(\lambda)$, namely
\[
  -\int a_\theta(x) \sin (\lambda q_\theta(x))\,dx.
\]

Let
\begin{equation}\label{def-delta}\delta_{\theta,\lambda}:=\max\{\delta^+_{\theta,\lambda}, \delta^-_{\theta,\lambda}\}\in(0,\infty), \text{ where }q^\pm_\theta(\delta^\pm_{\theta,\lambda})=\lambda^{-1}.\end{equation}
Here $\delta_{\theta,\lambda}$ is well defined for every $\theta$ when $\lambda>c_q^{-1}$ (recall \eqref{eq:q-theta}). And it follows from \eqref{eq:two-sided-height} that
\begin{equation}\label{eq:delta-bounds}
  C_0^{-1/2}\lambda^{-1/2}
  \le\delta_{\theta,\lambda}
  \le c_0^{-1/M}\lambda^{-1/M}.
\end{equation}
Also
$$\delta_{\theta,\lambda} \le |\{q_\theta\le\lambda^{-1}\}| =\delta_{\theta,\lambda}^++\delta_{\theta,\lambda}^- \le2\delta_{\theta,\lambda}.$$

Write
$$\int a_\theta(x) \sin (\lambda q_\theta(x))\,dx=\delta_{\theta,\lambda}\int a_\theta(\delta_{\theta,\lambda} x) \sin (\lambda q_\theta(\delta_{\theta,\lambda} x))\,dx.$$

Then, to prove this theorem, it suffices to show that there exists $\lambda_0>0$ such that
$$ \inf_{\theta\in\Theta, \lambda\geq \lambda_0}\left|\int a_\theta(\delta_{\theta,\lambda} x) \sin (\lambda q_\theta(\delta_{\theta,\lambda} x))\,dx\right|>0.$$
Suppose otherwise. Then there exist a sequence $\lambda_j\rightarrow\infty$ and pairs $(\lambda_j, \theta_j), j=1,2,\dots$, such that, with $\delta_j:=\delta_{\theta_j,\lambda_j}$,
\begin{equation}\label{tend-to-0}\int a_{\theta_j}(\delta_j x) \sin (\lambda_j q_{\theta_j}(\delta_j x))\,dx\rightarrow 0.\end{equation}
The main idea in the argument below is to find a convergent subsequence $\lambda_{j_k}q_{\theta_{j_k}}(\delta_{j_k} x)\rightarrow Q(x)$, where $Q$ is a convex polynomial of degree $\geq 2$ and $Q(0)=Q'(0)=0$. Then, intuitively,
$$a_{\theta_j}(0)^{-1}\int a_{\theta_j}(\delta_j x) \sin (\lambda_j q_{\theta_j}(\delta_j x))\,dx\rightarrow \int \sin Q(x)\,dx,$$
which is positive by the argument in Section 5, Remark 2, a contradiction.

Now we give a rigorous proof to contradict \eqref{tend-to-0}.

We first show that there exists a subsequence $j_k\rightarrow\infty$ such that 
$$\lambda_{j_k} q_{\theta_{j_k}}(\delta_{j_k} x)\rightarrow Q(x), \ \forall x\in\R,$$
where $Q$ is a polynomial of degree $\leq M$. 

Consider the Taylor expansion of $q_\theta$:
$$q_\theta(x)=\sum_{n=0}^M\frac{q_\theta^{(n)}(0)}{n!}x^n + R_\theta (x):=Q_\theta (x)+R_\theta (x).$$
For fixed $x\in\R$ and large $j$ such that $\delta_j |x|\leq 1$, the remainder of $\partial_x^n(\lambda_j q_{\theta_j}(\delta_jx))$ satisfies
\begin{equation}\label{polynomial-tail}|\partial_x^n(\lambda_j R_{\theta_j}(\delta_jx))|\leq \frac{\|q_\theta^{(M+1)}\|_{L^\infty}}{(M+1-n)!}\lambda_j\delta_j^{M+1}|x|^{M+1-n}\lesssim \lambda_j\delta_j^{M+1}|x|^{M+1-n},\ n=0,1,\dots, M,\end{equation}
uniformly in $\theta$ by \eqref{eq:uniform-in-theta}, thus $\rightarrow 0$ uniformly in $\theta$ by \eqref{eq:delta-bounds}. In particular, $|\partial_x^n(\lambda_j R_{\theta_j}(\delta_jx))|$ are uniformly bounded in $j$ and $x\in[-1,1]$.

Therefore, to find a convergent subsequence of $\lambda_j q_{\theta_j}(\delta_jx)$, it suffices to find a convergent subsequence of Taylor polynomials 
$$Q_j:=\lambda_j Q_{\theta_j}(\delta_jx)=\sum_{n=0}^M a_{j, n}x^n.$$
By the monotonicity of $q^\pm_\theta$ on $(0,1)$ and our choice of $\delta_{\theta,\lambda}$ in \eqref{def-delta}, we have
$$0\leq \min_\pm\lambda_j q^\pm_{\theta_j} (\delta_jx)\leq \min_\pm\lambda_j q^\pm_{\theta_j} (\delta_j)= 1, \ \forall\,x\in[0,1].$$
As the remainder $|\lambda_j R_{\theta_j}(\delta_jx)|$ is uniformly bounded in $j$ and $x\in[-1,1]$, it follows that 
$$\min\{\|Q_j\|_{L^\infty([-1,0])}, \|Q_j\|_{L^\infty([0,1])}\}\lesssim 1$$
 uniform in $j$. As $Q_j$s are polynomials of degree $\leq M$ and all norms on finite dimensional Banach spaces are equivalent, it follows that $$\|Q_j\|_{L^\infty([-1,0])}\approx \|Q_j\|_{L^\infty([0,1])}$$
 uniformly in $j$, and therefore $\|Q_j\|_{L^\infty([-1,1])}\lesssim 1$ uniformly in $j$. Then, since every linear operator between finite dimensional Banach spaces must be bounded, the linear operator
 $$(\sum_{n=0}^M a_n x^n, \|\cdot\|_{L^\infty([-1,1])})\rightarrow (a_0,\dots,a_M)_{l^\infty}$$ 
gives that
$$\max_{0\leq n\leq M}|a_{j,n}|\lesssim \|Q_j\|_{L^\infty([-1,1])}\lesssim 1$$
uniformly in $j$. So there exists a subsequence $j_k$ such that $\lim_{j_k\rightarrow\infty}a_{j_k,n}$ exists for all $n=0,\dots, M$. Hence, $ Q_{j_k}(x)$, thus $\lambda_{j_k} q_{\theta_{j_k}}(\delta_{j_k}x)$, is convergent to a polynomial $Q$ of degree $\leq M$ pointwise. In fact \eqref{polynomial-tail} implies that 
$$\partial^n(\lambda_{j_k} q_{\theta_{j_k}}(\delta_{j_k}x))\rightarrow \partial^n Q, n=0,\dots, M.$$

Notice that pointwise convergence preserves non-negativity, monotonicity, and convexity. Since $q_\theta(0)=q_\theta'(0)=0$ for all $\theta$, it follows that $Q(0)=Q'(0)=0$. Also, since $\min_\pm\{\lambda_j q_{\theta_j}(\pm\delta_j)\}= 1$ for all $j$, the polynomial $Q$ is not identically zero, thus strictly decreasing in $(-\infty, 0]$ and strictly increasing in $[0,\infty)$. Consequently, $\deg Q\geq 2$ and $|Q'(x)| \rightarrow \infty$ as $|x|\rightarrow \infty$.

Let $\{x_n\}_{n\in\mathbb{Z}}$ be an increasing sequence such that $Q(x_{\pm n})=2n\pi$, then $x_0=0$, $\lim_{n\rightarrow\infty} x_n=\infty$, $\lim_{n\rightarrow-\infty} x_n=-\infty$, and
$$\begin{aligned}\lim_{n\rightarrow\infty}\int_{x_{-n}}^{x_n}\sin Q(x)\,dx=&\lim_{n\rightarrow\infty }\left(\sum_{l=0}^{n-1} \int_{x_l}^{x_{l+1}} \sin Q(x)\,dx+\sum_{l=0}^{n-1} \int_{-x_{-l}}^{-x_{-(l+1)}} \sin Q(-x)\,dx\right)\\=&\lim_{n\rightarrow\infty}\left(\sum_{l=0}^{n-1} \int_{2l\pi}^{2(l+1)\pi}\sin (u) J_+(u)\,du+\sum_{l=0}^{n-1} \int_{2l\pi}^{2(l+1)\pi}\sin (v) J_-(v)\,dv\right),\end{aligned}$$
where $u=Q(x), v=Q(-x), x>0$ and $J_\pm$ are the Jacobians $$J_+(u)=\frac{\partial x}{\partial u}=\frac{1}{Q'(x)},\ J_-(v)=\frac{\partial x}{\partial v}=\frac{1}{-Q'(-x)}.$$
This limit can be further written as
$$\lim_{n\rightarrow\infty}\left(\sum_{l=0}^{n-1} \int_{2l\pi}^{(2l+1)\pi}\sin (u) (J_+(u)-J_+(u+\pi))\,du+\sum_{l=0}^{n-1} \int_{2l\pi}^{(2l+1)\pi}\sin (v) (J_-(v)-J_-(v+\pi))\,dv\right).$$
As $Q$ is strictly decreasing in $(-\infty, 0)$ and strictly increasing in $(0,\infty)$, so is $Q^{-1}$. Also $Q'$ is increasing over $\R$ due to the convexity of $Q$, and non-constant because $Q$ is a nonzero polynomial with $Q(0)=Q'(0)=0$. Overall, it follows that $J_\pm(u)$ are positive decreasing functions and each integral in the sum above is positive. Hence the monotone convergence theorem guarantees that
\begin{equation}\label{lim-positive}\lim_{n\rightarrow\infty}\int_{x_{-n}}^{x_n}\sin Q(x)\,dx:=\epsilon_0\in(0, \infty].\end{equation}
We shall show that this positivity contradicts \eqref{tend-to-0}. For the integral in \eqref{tend-to-0}, its tail near $\infty$ equals
$$\begin{aligned}
&\int_{x_n}^\infty a_{\theta_j}(\delta_j x) \sin (\lambda_j q_{\theta_j}(\delta_j x))\,dx\\=&\frac{a_{\theta_j}(\delta_j x_n) \cos (\lambda_j q_{\theta_j}(\delta_j x_n))}{\lambda_j\delta_j q'_{\theta_j}(\delta_j x_n)}+\int_{x_n}^\infty \left(\frac{\delta_j a'_{\theta_j}(\delta_j x)}{\lambda_j\delta_j q'_{\theta_j}(\delta_j x)}-\frac{a_{\theta_j}(\delta_j x)(\delta_j q'_{\theta_j}(\delta_j x))'}{\lambda_j(\delta_j q'_{\theta_j}(\delta_j x))^2}\right)\cos (\lambda_j q_{\theta_j}(\delta_j x))dx.	
\end{aligned}
$$
As $q'_\theta>0$ and $q''_\theta\geq 0$ in $(0, 1)$, its absolute value is
$$\leq \frac{1}{\lambda_j\delta_j q'_{\theta_j}(\delta_j x_n)}+\int_{x_n}^{1/\delta_j} \frac{\delta_j |a'_{\theta_j}(\delta_j x)|}{\lambda_j\delta_j q'_{\theta_j}(\delta_j x)}dx+\int_{x_n}^{1/\delta_j}\frac{(\delta_j q'_{\theta_j}(\delta_j x))'}{\lambda_j(\delta_j q'_{\theta_j}(\delta_j x))^2}dx\lesssim \frac{1}{\lambda_j\delta_j q'_{\theta_j}(\delta_j x_n)},$$
uniform in $\theta_j$. The same estimate holds on the tail of \eqref{tend-to-0} near $-\infty$ because $(q^-_\theta)'>0$ and $(q^-_\theta)''\geq 0$ hold in $(0, \infty)$. Therefore, as $\lambda_j\delta_j q'_{\theta_j}(\delta_j R)\rightarrow Q'(R)$ and $\deg Q\geq 2$, we have
\begin{equation}\label{tail-j}\lim_{n\rightarrow\infty}\limsup_{j_k\rightarrow\infty }\left|\int_{\R\backslash[x_{-n}, x_n]} a_{\theta_{j_k}}(\delta_{j_k} x) \sin (\lambda_{j_k} q_{\theta_{j_k}}(\delta_{j_k} x))\,dx\right|\lesssim \lim_{n\rightarrow\infty}\frac{1}{|Q'(x_{\pm n})|}=0.\end{equation}

Combining \eqref{lim-positive} and \eqref{tail-j}, we can choose $x_{n_0}$ independent of $j$ such that
\begin{equation}\label{>-epsilon/2}\int_{x_{-n_0}}^{x_{n_0}}\sin Q(x)\,dx>\min\{1, \epsilon_0\}/2\end{equation}
while
\begin{equation}\label{<-epsilon/4}\limsup_{j_k\rightarrow\infty }|\int_{\R\backslash[x_{-{n_0}}, x_{n_0}]} a_{\theta_j}(\delta_j x) \sin (\lambda_j q_{\theta_j}(\delta_j x))\,dx|<\inf_\theta a_\theta(0)\cdot \min\{1, \epsilon_0\}/4.\end{equation}
This is impossible because
$$\begin{aligned}\left|\int_{x_{-n_0}}^{x_{n_0}} \sin Q(x)\,dx\right|=&\lim_{j_k\rightarrow \infty} \left|\int_{x_{-n_0}}^{x_{n_0}} \frac{a_{\theta_{j_k}}(\delta_{j_k} x)}{a_{\theta_{j_k}}(0)} \sin (\lambda_{j_k} q_{\theta_j}(\delta_{j_k} x))\,dx\right|\\\leq & \frac{1}{\inf_\theta a_{\theta}(0)}\cdot\limsup_{j_k\rightarrow \infty }\left|\int_{x_{-n_0}}^{x_{n_0}} a_{\theta_{j_k}}(\delta_{j_k} x) \sin (\lambda_{j_k} q_{\theta_{j_k}}(\delta_{j_k} x))\,dx\right|\\\leq &\min\{1, \epsilon_0\}/4,\end{aligned}$$
a contradiction to \eqref{>-epsilon/2}, where the last line follows from \eqref{tend-to-0} and \eqref{<-epsilon/4}.

\section{Proof of Theorem \ref{thm-lower-Fourier-bound}}\label{sec-pf-lower-Fourier-bound}
We may assume $d\geq 3$, as the case $d=2$ follows directly from Theorem \ref{thm-reverse}.

For each $p_0\in\Gamma$, translate $p_0$ to the origin and rotate coordinates so
that a neighborhood of $p_0$ is parametrized by
\[
 \Phi(x,t)=(x,t,h(x,t)),
 \qquad x\in U\subset (-10,10)^{d-2},\quad t\in (-10,10),
\]
where $h$ is convex, $h(0)=0$, $\nabla h(0)=0$. As the second fundamental form is given by the matrix $\frac{H h}{\sqrt{1+|\nabla h|^2}}$, we can write
$$Hh=\begin{pmatrix}
    D^2_xh(x,t) & \nabla_x h_{t}\\ (\nabla_x h_{t})^T & h_{tt}
\end{pmatrix}$$
and assume that $D^2_xh\approx I_{d-2}$.

By a limiting argument, the contact order in a neighborhood of the origin is $\leq M$. Then, by compactness, we have $$\inf_{(x,t)\in [-1, 1]^{d-1}, \theta\in S^{d-2}} \sum_{n=2}^M |D_\theta^n h(x,t)|>0,$$
which implies (see Section 4 in \cite{BNW88})
\begin{equation}\label{contact-order-M}|h(x,t)-h(x_0, t_0)
   -\nabla h(x_0,t_0)\cdot(x-x_0, t-t_0)|\geq c_0|(x-x_0, t-t_0)|^M\end{equation}
for all $(x_0,t_0)\in (-1, 1)^{d-1}$ and $(x,t)\in (-1, 1)^{d-1}$.

We only consider $\xi=(\xi_1,\dots,\xi_d)=|\xi|\theta$ with $\theta\in S^{d-1}$ near the north/south pole, so $|\xi|\approx |\xi_d|$. Denote $\lambda:=\xi_d$ and
$$\xi =\lambda (v, 1)=\lambda (v', v_{d-1}, 1).$$
As $\Gamma$ is convex of finite type, we may assume that for each $\xi=|\xi_d|(v,1)$ with $\xi/|\xi|\in G^\pm (K)$ near the north pole there exists a unique $(x_\theta,t_\theta)$ at which the normal 
\begin{equation}\label{normal-v}(\nabla h(x,t), -1)=(-v,-1).\end{equation} 
Then
$$\widehat{\psi\,d\sigma}(\xi)=\iint 
  e^{-2\pi i\lambda (v\cdot (x,t)+h(x,t))}b(x,t)\,d x\, d t,$$
  where $b(x,t)\in C_0^\infty$ is nonnegative and $\inf_{\theta\in G(K)}b(x_\theta, t_\theta)\gtrsim 1$.

We shall fix $t$ and apply the stationary phase method to the integral over $x$. So we need to find the stationary point for the phase function $v'\cdot x+h(x,t)$. In other words, we need to solve for $v'+\nabla_x h=0$. Due to the uniform positive definiteness of $D^2_x h$, we may assume this stationary point is unique.

Because $D^2_xh\approx I_{d-2}$ and $\nabla_x h(0,0)=0$, one can solve for \begin{equation}\label{implicit-function-thm}\nabla_x h(x(v', t),t)+v'=0\end{equation}
near the origin by the implicit function theorem. We may assume $x(v',t)$ is smooth in $(v',t)$ on $(-1,1)^{d-1}$. In particular, $x(v', t_\theta)=x_\theta$.

Let $\chi\in C_0^\infty((-1,1))^{d-1}$ be nonnegative with $\chi=1$ near the origin. We may assume $\chi=1$ on a neighborhood of all stationary points for $G^\pm(K)$. Denote $\tilde{b}:=b\cdot \chi $. Because of the fast decay of 
$$\iint 
  e^{-2\pi i\lambda (v\cdot (x,t)+h(x,t))}b(1 -\chi)(x,t)\,d x\, d t$$
by integration by parts, one can reduce $\widehat{\psi\,d\sigma}(\xi)$ to
\begin{equation}\label{Fourier-transform}I(\lambda, \theta)=\iint 
  e^{-2\pi i\lambda (v\cdot (x,t)+h(x,t))}\tilde{b}(x,t)\,d x\, d t.\end{equation}
The fixed compact support of the amplitude, the
uniform bounds on the required phase and amplitude derivatives, and
the uniform positive definiteness of $D_x^2h$ ensure that the
stationary phase remainder below is uniform in $\theta$.

Then, as for each $t\in(-1,1)$, the unique stationary point in $(-1,1)^{d-2}$ is $x(v',t)$, it follows that
\begin{equation}\label{apply-stationary-phase}\begin{aligned}|I(\lambda, \theta)|=&C_d\lambda^{-\frac{d-2}{2}}
   \left|\int e^{-2\pi i\lambda g_\theta(t)}a_\theta(t)\,d t\right|
   +O(\lambda^{-\frac{d-2}{2}-1})\\=&C_d\lambda^{-\frac{d-2}{2}}
   \left|\int e^{-2\pi i\lambda (g_\theta(t)-g_\theta(t_\theta))}a_\theta(t)\,d t\right|
   +O(\lambda^{-\frac{d}{2}}),\end{aligned}\end{equation}
where
\begin{equation} g_\theta(t)
 =v'\cdot x(v',t)+v_{d-1}t+h(x(v',t),t),
 \end{equation}
 and
\begin{equation}
 a_\theta(t)
 =\frac{\tilde{b}(x(v', t),t)}{\sqrt{\det D_x^2h(x(v',t),t)}}.
\end{equation}
Here the error term $O(\lambda^{-\frac{d}{2}})$ is uniform in $\theta$ because derivatives of $g_\theta, a_\theta$ are uniformly bounded in $\theta$.

Now we check properties of the phase function
$$q_\theta(t):=g_\theta(t)-g_\theta(t_\theta).$$
Recall $x(v', t_\theta)=x_\theta$ and $\nabla h(x_\theta, t_\theta)=-v$ from \eqref{normal-v}. Therefore
\begin{equation}\label{expression-q-theta}q_\theta(t)=h(x(v',t),t)-h(x_\theta,t_\theta)-\nabla h(x_\theta, t_\theta)((x(v',t),t)-(x_\theta,t_\theta))\end{equation}
that satisfies $q_\theta(t_\theta)=0$ and 
$$q'_\theta(t)=\nabla_xh\cdot x_t +h_t-\nabla h(x_\theta, t_\theta)\cdot (x_t, 1)=h_t(x(v',t),t)-h_t(x_\theta, t_\theta)$$
due to \eqref{normal-v}. Hence $q'_\theta(t_\theta)=0$ and 
$$q''_\theta(t)=\nabla h_t\cdot (x_t,1).$$
To compute $x_t$, we differentiate both sides of \eqref{implicit-function-thm} to obtain $x_t=-(D^2_x h)^{-1} \cdot \nabla_x h_t$. Therefore 
$$q''_\theta(t)=h_{tt}- (\nabla_x h_t)^T\cdot (D^2_x h)^{-1} \cdot \nabla_x h_t$$
which is the Schur complement of $D_x^2 h$ in $Hh$, thus $\geq 0$.

Finally, by \eqref{contact-order-M} and the expression of $q_\theta$ from \eqref{expression-q-theta}, we have
$$|q_\theta(t)|\geq c_0 |(x(v',t),t)-(x_\theta, t_\theta)|^M\geq c_0|t-t_\theta|^M.$$

Then we change variables 
\[
  \bar q_\theta(u)=q_\theta(t_\theta+u),\qquad
  \bar a_\theta(u)= a_\theta(t_\theta+u)
\]
and apply Theorem~\ref{thm-reverse}. Notice the uniformity condition \eqref{eq:uniform-in-theta} follows because derivatives of $g_\theta, a_\theta$ are uniformly bounded in $\theta$. Hence there exists $\lambda_0>0$ independent of $\theta$ such that, for all $\lambda>\lambda_0$,
$$\begin{aligned}|I(\lambda, \theta)|\gtrsim &\lambda^{-\frac{d-2}{2}}|\{ t: q_\theta(t) \leq \lambda^{-1}\}|+O(\lambda^{-\frac{d}{2}})\\
\gtrsim &\lambda^{-\frac{d-2}{2}}|\{t: q_\theta(t)\leq \lambda^{-1}\}|,\end{aligned}$$
where the implicit constant is independent of $\theta$. Here $O(\lambda^{-d/2})$ is an error term because $|\{t: q_\theta(t)\leq \lambda^{-1}\}|\gtrsim \lambda^{-1/2}$.

It remains to show
\[
\lambda^{-\frac{d-2}{2}}
|\{t:q_\theta(t)\leq \lambda^{-1}\}|
\gtrsim
\sigma\bigl(B(p_\theta,\lambda^{-1})\bigr).
\]

Recall that, for each fixed $t$,
\[
D_x^2 h \approx I_{d-2}
\]
uniformly in $\theta$. Hence, by the Morse lemma, after a
smooth change of variables in the $x$-variables, with Jacobian uniformly
bounded above and below, the height above the tangent plane at
$p_\theta$ is locally of the form
\[
q_\theta(t)+|y|^2.
\]
Therefore, by Fubini's theorem,
$$\sigma(B(p_\theta,|\xi|^{-1}))
\approx
\int \max\{0, \lambda^{-1}-q_\theta(t)\}^{\frac{d-2}{2}}\,dt\lesssim \lambda^{-\frac{d-2}{2}}|\{t:q_\theta(t)\leq \lambda^{-1}\}|.$$

\section{Proof of Theorem \ref{thm-main-3}}\label{sec-pf-thm-1.6}

Since $G^\pm(\Gamma)$ has nonempty interior, by Sard's thoerem there must exist some $\Gamma_i\subset\Gamma$ and  some $p_0\in G_i^{-1}(D)\subset\Gamma_i$ on which the Gaussian curvature is nonzero. We may assume that $G_i(p_0)=(1,0,\dots,0)$. Therefore, by \eqref{Fourier-upper-bound}, there exists $\phi\in C_0^\infty(\Gamma)$ with 
$$G^\pm(\supp\phi)\subset \{|x|=1: |(x_2,\dots,x_{d})|\leq \delta/2\}\subset D, \ 0<\delta<1/100,$$
such that $$|\widehat{\phi\,d\sigma}(\xi)|\lesssim  (1+|\xi|)^{-\frac{d-1}{2}}, \forall\, \xi\in\R^d.$$

For $d=2$, the conclusion on $\Gamma$ follows directly from Theorem 1.3, 1.4 in \cite{ILLW19}, and the conclusion on half-$\Gamma$ follows from the same argument in \cite{CL25}. We omit details.

From now $d\geq 3$.

Assume the frame spectrum $\Lambda$ exists for the surface measure $\sigma_\Gamma$ on $\Gamma$. A well known property of $\Lambda$ is
\begin{equation}\label{convergence-lambda}\sum_{\lambda\in\Lambda\backslash\{0\}} |\lambda|^{-\alpha}<\infty,\ \forall\alpha>d.\end{equation}
To see this, notice that there exists $\epsilon>0$ such that $|\hat{\sigma_\Gamma}|\geq C>0$ on $B(0, \epsilon)$, then for all $x\in\R^d$,
\begin{equation}\label{separation}B\sigma(\Gamma)\geq\sum_{\lambda\in\Lambda}|\hat{\sigma_\Gamma}(\lambda-x)|^2\geq C^2\#\{\Lambda\cap B(x,\epsilon)\},\end{equation}
which implies that
$$\sum_{\lambda\in\Lambda\backslash\{0\}} |\lambda|^{-\alpha}\lesssim 1+\sum_{\vec{n}\in\Z^d\backslash\{0\}} |\vec{n}|^{-\alpha}<\infty,\ \forall\alpha>d.$$

As $G^{\pm}(\Gamma)$ contains a neighborhood of an equator, we may assume 
$$D:=\{|x|=1: |x_d|< 10\delta\}\subset\{|x|=1: |x_d|\leq 100\delta\}\subset G^{\pm}(\Gamma)$$
with $0<\delta<1/100$. Let $\mathcal{C}_D$ denote the cone generated by $D$, namely
$$\mathcal{C}_D:=\{tx:x\in D, t\in\R\}.$$

\subsection{Convergence of $\sum|\lambda|^{-d+1}$ in $\mathcal{C}_D$}\label{subsec-con}

 As $\overline{D}$ is a compact subset of $S^{d-1}$, one can find a finite family $\{\psi_j\}$ from $(ii)$ in Theorem \ref{thm-main-3}, such that, for each $j$, there exists an open set $D_j\subset S^{d-1}$ with $\overline{D_j}\Subset G^\pm(\{\psi_j>0\})$ satisfying
 $$\overline{D}\subset\bigcup_j D_j\cup (-D_j).$$ 
Then, by \eqref{goal-lower-bound} and the upper frame bound, we have 
$$\sum_{\lambda \in \Lambda\cap \mathcal{C}_D}(1+|\lambda|)^{-(d-1)}\leq \sum_j\sum_{\lambda \in \Lambda\cap \mathcal{C}_{D_j}}(1+|\lambda|)^{-(d-1)}\lesssim \sum_j\sum_{\lambda\in\Lambda\cap \mathcal{C}_{D_j}}\int_{|\eta|\leq C}|\hat{\psi_j\,d\sigma}(\lambda+\eta)|^2d\eta\lesssim 1.$$

\subsection{Divergence of $\sum|\lambda|^{-d+1}$ in $\mathcal{C}_D$}\label{subsec-div}
Recall $p_0$ and $\phi$ from the beginning of this section, and denote 
$$\mathcal{C}_\phi:=\{\xi\in\R^d:|(\xi_2,\dots,\xi_d)|\leq \delta|\xi|\}.$$


We shall need the following geometric lemma.
\begin{lem}\label{geo-lem}
 For all $R\geq 1$, there exists $\xi^R$ in the plane $x_d=0$ such that
$$\mathcal{N}_{2\delta R}(\mathcal{C}_\phi-\xi^R)\subset \mathcal{C}_D.$$	
\end{lem}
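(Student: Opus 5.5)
We take the explicit choice $\xi^R:=-R\,e_2$. It lies in the plane $x_d=0$ because $d\geq 3$, so $e_2\neq e_d$. Then $\mathcal C_\phi-\xi^R=\mathcal C_\phi+Re_2$.

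Since $D=\{x\in S^{d-1}:|x_d|<10\delta\}$, we have
$$\mathcal C_D\supset\{\zeta\in\R^d\setminus\{0\}:|\zeta_d|<10\delta|\zeta|\}.$$
So it suffices to show that every $\zeta=\xi+Re_2+w$ with $\xi\in\mathcal C_\phi$ and $|w|\le 2\delta R$ satisfies $|\zeta_d|<10\delta|\zeta|$; in particular $\zeta\neq 0$.

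\textbf{Bounds on $\zeta$.} From $\xi\in\mathcal C_\phi$ we have $|(\xi_2,\dots,\xi_d)|\le\delta|\xi|$, hence $|\xi_1|\ge\sqrt{1-\delta^2}\,|\xi|\ge 0.99|\xi|$.

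For the last coordinate,
$$|\zeta_d|\le|\xi_d|+|w_d|\le\delta|\xi|+2\delta R.$$

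For the length there are two lower bounds:
$$|\zeta|\ge|\zeta_1|\ge 0.99|\xi|-2\delta R,$$
$$|\zeta|\ge|\zeta_2|\ge R-|\xi_2|-|w_2|\ge R-\delta|\xi|-2\delta R.$$
The second bound is where shifting in a horizontal direction orthogonal to $e_1$ helps. Shifting along $e_1$ would fail: the backward half of the double cone $\mathcal C_\phi$ would be moved near the origin, where the $x_d$-component is not small relative to $|\zeta|$.

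\textbf{Case split, using $\delta<1/100$.}

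If $|\xi|\ge R$, the first lower bound gives
$$10\delta|\zeta|\ge 9.9\delta|\xi|-0.2\delta R.$$
This exceeds $\delta|\xi|+2\delta R$ because $8.9\delta|\xi|\ge 8.9\delta R>2.2\delta R$.

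If $|\xi|<R$, then $|\zeta_d|<3\delta R$. The second lower bound gives $|\zeta|>R(1-3\delta)\ge 0.97R$, so
$$10\delta|\zeta|>9.7\delta R>3\delta R.$$

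In both cases $|\zeta_d|<10\delta|\zeta|$ with $\zeta\neq0$, so $\zeta\in\mathcal C_D$, which proves the lemma.

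\textbf{Where care is needed.} The argument is elementary. The one real choice is the direction of the shift: it must lie in $\{x_d=0\}$ and be transverse to the axis $e_1$ of $\mathcal C_\phi$. The rest is keeping the constants compatible with $\delta<1/100$.
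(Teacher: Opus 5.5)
Your proof is correct and is essentially the paper's argument. The paper also shifts along $e_2$, taking $\xi^R=(0,2R,0,\dots,0)$, and splits into the cases $|\xi^0|\ge R$, where the first coordinate controls $|\zeta|$, and $|\xi^0|<R$, where the size of the shift does; the only cosmetic difference is that in the second case you bound $|\zeta|$ below by its $e_2$-coordinate rather than by $|\xi^R|-|\xi^0|-2\delta R$, which is why a shift of length $R$ suffices for you while the paper uses $2R$.
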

\begin{proof}
	Take 
	$$\xi^R:=(0, 2R,0,\dots,0).$$
	For each $\xi\in \mathcal{N}_{2\delta R}(\mathcal{C}_\phi-\xi^R)$, there exists $\xi^0\in \mathcal{C}_\phi$ such that $|\xi-(\xi^0-\xi^R)|\leq 2\delta R$.

If $|\xi^0|\leq R$, then 
$$|\xi|\geq |\xi^R|- |\xi^0|-|\xi-(\xi^0-\xi^R)|\geq (1-2\delta)R.$$ 
As $\xi^0\in\mathcal{C}_\phi$, we have $|\xi^0_d|\leq \delta |\xi^0|\leq \delta R$. Therefore, due to $\xi^R_d=0$,
$$|\xi_d|\leq |\xi_d^0|+2\delta R\leq 3\delta R\leq \frac{3\delta}{1-2\delta}|\xi|\leq 10\delta |\xi|.$$

If $|\xi^0|\geq R$, then, due to $\xi^R_1=0$,
$$|\xi|\geq |\xi_1|\geq  |\xi^0_1|-2\delta R\geq (1-\delta)|\xi^0|-2\delta R\geq (1-3\delta)|\xi^0|,$$
and therefore
$$|\xi_d|\leq 3\delta |\xi^0|\leq \frac{3\delta}{1-3\delta}|\xi|\leq 10\delta |\xi|.$$
\end{proof}

Now we are ready to show the divergence of $\sum |\lambda|^{-d+1}$ for the contradiction. By the lower frame bound,
\begin{equation}\label{lower-frame-bound}1\lesssim R^{-d}\int_{B_{\delta R}}\sum_{\lambda \in \Lambda}|\widehat{\phi\,d\sigma_\Gamma}(\lambda+\xi^R+\xi)|^2d\xi.\end{equation}
Also notice that, by \eqref{Fourier-upper-bound}\eqref{Fourier-fast-decay}, we have
\begin{equation}\label{Fourier-decay-phi}|\widehat{\phi\,d\sigma_\Gamma}(\xi)|\lesssim  (1+|\xi|)^{-\frac{d-1}{2}}, \forall\, \xi\in\R^d,\end{equation}
and
\begin{equation}\label{fast-decay-phi}|\widehat{\phi\,d\sigma_\Gamma}(\xi)|\leq C_N|\xi|^{-N}, \forall \xi\notin\mathcal{C}_\phi.\end{equation}

Let $c_\delta>0$ be a small number that will be determined later. 

For each $\lambda \notin \mathcal{N}_{2\delta R}(\mathcal{C}_\phi-\xi^R)$ and $|\xi|\leq \delta R$, we have $\dist(\lambda+\xi^R+\xi, \mathcal{C}_\phi)\geq \delta R$. In particular, $\lambda+\xi^R+\xi\notin \mathcal{C}_\phi$ and $|\lambda+\xi^R+\xi|\geq \delta R$. Therefore, by \eqref{fast-decay-phi} and \eqref{separation},
$$R^{-d}\int_{B_{\delta R}}\sum_{\lambda \in \Lambda\backslash \mathcal{N}_{2\delta R}(\mathcal{C}_\phi-\xi^R)}|\widehat{\phi\,d\sigma_\Gamma}(\lambda+\xi^R+\xi)|^2d\xi\lesssim \sum_{|\vec{n}|\geq \delta R}|\vec{n}|^{-10d},$$
which is $\leq c_\delta$ when $R$ is large, say $R>R_0$.

Now we invoke the geometric lemma above. By Lemma \ref{geo-lem}, after removing those $\lambda$ in $(\mathcal{N}_{2\delta R}(\mathcal{C}_\phi-\xi^R))^c$, the remaining frequencies lies in
$$\mathcal{N}_{2\delta R}(\mathcal{C}_\phi-\xi^R)\subset\mathcal{C}_D.$$

Then, as we have shown the convergence of $\sum_{\lambda \in \Lambda\cap \mathcal{C}_D}(1+|\lambda|)^{-(d-1)}$ from Section \ref{subsec-con}, together with \eqref{Fourier-decay-phi}, one can take $R_0$ large such that for all $R>R_0$, 
$$R^{-d}\int_{B_{\delta R}}\sum_{\lambda \in \Lambda\cap \mathcal{C}_D\cap B_R^c}|\widehat{\phi\,d\sigma_\Gamma}(\lambda+\xi^R+\xi)|^2d\xi\lesssim \sum_{\lambda \in \Lambda\cap \mathcal{C}_D\cap B_R^c}(1+|\lambda|)^{-(d-1)}\leq c_\delta.$$

Combining these estimates, if we choose $c_\delta$ much smaller than the left hand side of \eqref{lower-frame-bound}, we end up with
$$1\lesssim R^{-d}\int_{B_{\delta R}}\sum_{\lambda \in \Lambda\cap \mathcal{C}_D\cap B_R}|\widehat{\phi\,d\sigma_\Gamma}(\lambda+\xi^R+\xi)|^2d\xi,$$
which implies, as $\lambda+\xi^R+\xi\in B_{4R}$ always holds,
$$1\lesssim R^{-d}\int_{B_{4R}}|\widehat{\phi\,d\sigma_\Gamma}|^2\cdot \#\{\Lambda\cap \mathcal{C}_D\cap B_{4R}\}.$$
By \eqref{Fourier-decay-phi}, we conclude that

$$\#(\Lambda\cap \mathcal{C}_D\cap B_R)\gtrsim R^{d-1},$$
which implies
$$\sum_{\lambda \in \Lambda\cap \mathcal{C}_D}(1+|\lambda|)^{-(d-1)}= (d-1)\int_0^\infty (1+r)^{-d}\cdot \#(\Lambda\cap \mathcal{C}_D\cap B_r) \,dr=\infty,$$
a contradiction to the convergence from Section \ref{subsec-con}.

\subsection{Centrally symmetric $\Gamma$}
Let $\sigma=\sigma_\Gamma$ denote the surface measure on $\Gamma$, and $\sigma_{\Gamma/2}:=\sigma|_{\Gamma/2}$. Assume that $\Lambda$ is a frame spectrum for $\sigma_{\Gamma/2}$.

Let $D, D_j, \psi_j, p_0$ be as in the previous section. We may further assume $p_0\in\Gamma/2$. If not, one can perturb $p_0$ to some $p_0'\in\Gamma/2$ and slightly rotate $\Gamma$ such that $D\subset G^\pm(\Gamma)$ still holds and $G_i(p_0')=(1,0,\dots,0)$. Because of this, if one can show the convergence of $\sum|\lambda|^{-d+1}$ in $\mathcal{C}_D$, then the divergence, thus the contradiction, follows in the same way as Section \ref{subsec-div}.

It remains to show 
$$\sum_{\lambda \in \Lambda\cap \mathcal{C}_D}(1+|\lambda|)^{-d+1}<\infty.$$

Thanks to the central symmetry, if we take
$$\tilde{\psi_j}(x):=\psi_j(x)+\psi_j(-x),$$
then
$$\begin{aligned}2\Re \hat{\tilde{\psi_j}\,d\sigma_{\Gamma/2}}(\xi)= & \int_{\Gamma/2} \tilde{\psi_j}(x) e^{-2\pi i x\cdot \xi}\,d\sigma+ \int_{\Gamma/2} \tilde{\psi_j}(x) e^{2\pi i x\cdot \xi}\,d\sigma\\=&\int_{\Gamma/2} \tilde{\psi_j}(x) e^{-2\pi i x\cdot \xi}\,d\sigma+ \int_{-\Gamma/2} \tilde{\psi_j}(x) e^{-2\pi i x\cdot \xi}\,d\sigma\\=&\int_{\Gamma} (\psi_j(x)+\psi_j(-x)) e^{-2\pi i x\cdot \xi}\,d\sigma\\=&\int_{\Gamma} \psi_j(x) e^{-2\pi i x\cdot \xi}\,d\sigma+\int_{\Gamma} \psi_j(x) e^{2\pi i x\cdot \xi}\,d\sigma\\=&2\Re \hat{\psi_j\,d\sigma_\Gamma}(\xi).\end{aligned}$$
In particular,
\begin{equation}\label{compare-real-part}|\hat{\tilde{\psi_j}\,d\sigma_{\Gamma/2}}|\geq |\Re \hat{\tilde{\psi_j}\,d\sigma_{\Gamma/2}}|=|\Re \hat{\psi_j\,d\sigma_\Gamma}|,\end{equation}
so \eqref{lower-bound-Re} can help. Let $C, R_0$ be constants from there. 

A geometric observation is that, there exist $R'_0>R_0$, $D_j'\subset D_j$, such that $D\subset \cup D_j'$ and $\lambda+\xi\in \mathcal{C}_{D_j}$ for all $\lambda\in \mathcal{C}_{D_j'}\cap B_{R'_0}^c$ and $|\xi|\leq C$.

Finally, by \eqref{compare-real-part},\eqref{lower-bound-Re}, and the upper frame bound with $f=e^{-2\pi i x\cdot \xi}\tilde{\psi_j}(x)$, it follows that
$$1\gtrsim \sum_{\lambda\in \Lambda \cap \mathcal{C}_{D_j'}\cap B_{R'_0}^c } \int_{|\xi|\leq C}|\hat{\tilde{\psi_j}\,d\sigma_{\Gamma/2}}(\lambda+\xi)|^2\,d\xi\gtrsim \sum_{\lambda\in \Lambda \cap \mathcal{C}_{D_j'}\cap B_{R'_0}^c} |\lambda|^{-d+1}.$$

Summing over $j$, it follows that
$$\sum_{\lambda \in \Lambda\cap \mathcal{C}_D}(1+|\lambda|)^{-d+1}<\infty,$$
as desired.

\section{A non-convex example}\label{sec-eg-convex}
In this section, we show that \eqref{goal-lower-bound} fails on the curve $(x, x^3)$ near the origin.

Suppose $a\in C_0^\infty(\R)$ and $a=1$ near the origin. For $\lambda_1=\alpha \lambda_2^{1/3}$, where $\alpha>0$ is a constant that will be clarified later, consider
\begin{equation}\label{Airy}\begin{aligned}F(\lambda_1, \lambda_2):=&\int e^{i(-\lambda_1t+\lambda_2 t^3/3)} a(t)dt\\=& \lambda_2^{-1/3}\int e^{i (-\lambda_1\lambda_2^{-1/3}t+t^3/3)} a(\lambda_2^{-1/3}t)dt.\end{aligned}\end{equation}
Lemma 5.1 in \cite{BGGIST07} shows that an expansion like \eqref{expansion-x-3} still holds if $\alpha>0$ is small enough. We shall show that there exists a constant $\alpha_0>0$ such that the expected lower bounds in \eqref{expansion-x-3},\eqref{goal-lower-bound} fail with $\lambda_1=\alpha_0\lambda_2^{1/3}$.

Integration by parts shows that
$$\lambda_2^{-1/3}\int e^{i (-\lambda_1\lambda_2^{-1/3}t+t^3/3)} (1-a(\lambda_2^{-1/3}t))dt=O(\lambda_2^{-1}).$$
So $F(\lambda_1, \lambda_2)$ is reduced to
$$\lambda_2^{-1/3}\int e^{i (-\lambda_1\lambda_2^{-1/3}t+t^3/3)}\,dt=2\pi\lambda_2^{-1/3}\Ai(-\lambda_1\lambda_2^{-1/3}),$$
  where $\Ai(x)$ is the Airy function $$\text{Ai}(x):=\frac{1}{\pi}\int_0^\infty\cos(\frac{t^3}{3}+xt)\,dt,$$ that is known to be smooth and have negative real zeros. 
  
Let $-\alpha_0<0$ be a real zero of $\Ai$.  Then the above shows that
$$|F(\alpha_0\lambda_2^{1/3}, \lambda_2)|\lesssim \lambda_2^{-1}.$$
Therefore, for each $\lambda_1$ in a $\rho$-neighborhood of $\alpha_0\lambda_2^{1/3}$, the ratio $\lambda_1\lambda_2^{-1/3}$ lies in the $\rho\lambda_2^{-1/3}$ neighborhood of $\alpha_0$, which implies
$$|F(\lambda_1, \lambda_2)|=|F(\alpha_0\lambda_2^{1/3}, \lambda_2)|+O( \lambda_2^{-1/3}\cdot \rho\lambda_2^{-1/3})+O(\lambda_2^{-1})=O(\rho\lambda_2^{-2/3}).$$

Hence, for any $\epsilon>0$ and any $\rho>0$, there exists a sequence of $\rho$-balls $B(\vec{\lambda}_i, \rho)$ lying in the cone generated by the $\epsilon$-neighborhood of the north pole, such that
$$\lim_{i\rightarrow \infty}|\vec{\lambda}_i|^{4/3}\int_{B(\vec{\lambda_i}, \rho)}|F(\xi)|^2\,d\xi<\infty.$$
In particular, \eqref{goal-lower-bound} fails.

\section{An example with two vanishing principal curvatures}\label{sec-eg-higher-dimension}

In this section, we shall construct a smooth convex surface 
 $$\Gamma:=\{(x,y,P(x,y)):(x,y)\in\R^2\}$$
of finite type, $P(0)=0$, $\nabla P(0)=0$, such that every relatively open neighborhood $U$ of
the origin in $\Gamma$ admits a nonnegative $\psi\in C_0^\infty(U)$, a compact
neighborhood $K\subset\{\psi>0\}$ of the origin with $\psi>0$ on $K$, and
frequencies $\xi_\lambda$ with $\lambda\to+\infty$, satisfying
\begin{equation}\label{eq:local-conclusion}
 \frac{\xi_\lambda}{|\xi_\lambda|}\in G^{\pm}(K),\qquad
 |\xi_\lambda|\approx\lambda,\qquad
 |\widehat{\psi\,d\sigma}(\xi_\lambda)|\le C_N\lambda^{-N}.
\end{equation}
In dimensions $4$ and higher, one can just take the surface
$$\{(x_1,\dots,x_{d-1}, P(x_1, x_2)+x_3^2+\cdots +x_{d-1}^2)\}.$$ 

Now we start our construction. For each even integer $2m\geq 6$, let
\begin{equation}\label{eq:local-polynomial}
 P_m(x,y):=\left(x+\frac{y^2}{2m}\right)^{2m}
             +\frac{1}{2m}(x-y^2)^{2m}.
\end{equation}
Observe that $P(0)=0$, $\nabla P(0)=0$, and
\begin{equation}
	\label{lower-bound-P-m}
	P_m(x,y)\geq \frac{1}{2m}\left(|x|+\frac{y^2}{2m}\right)^{2m}\gtrsim_m |x|^{2m}+|y|^{4m}.
\end{equation}

We claim that $P_m$ is strictly convex, of
finite type, and for all sufficiently large $m$ there exist $(u_{m},v_{m})\in\R^2$ such that
\begin{equation}\label{eq:local-model-zero}
 F_{m}(u_m,v_m):=\lim_{\epsilon\downarrow0}
 \int_{\R^2}e^{-(\epsilon-i)P_m(x,y)-i(u_mx+v_my)}\,dx\,dy=0.
\end{equation}
We leave the proof to the end of this section. 

Fix such an $m=m_0$ and write $P:=P_{m_0}$, $(u_*,v_*):=(u_{m_0},v_{m_0})$, and take
\[
 \Gamma:=\{(x,y,P(x,y)):(x,y)\in\R^2\}.
\]
Under local coordinates near the origin, we can choose $\psi\in C_0^\infty(\Gamma)$ such that
\begin{equation}\label{from-psi-to-a}\psi\,d\sigma= a(x,y)\,dx\,dy,\end{equation}
where $a\in C_0^\infty$ and $a=1$ near the origin. We may assume $K$ is contained in the interior of $\{a=1\}$.

For $\lambda\ge1$, let
\begin{equation}\label{eq:local-frequencies}
 \xi_\lambda:=\frac1{2\pi}
 \bigl(u_*\lambda^{1/(2m_0)},v_*\lambda^{1/(4m_0)},-\lambda\bigr).
\end{equation}
Since $P=P_{m_0}$ is strictly convex, $P(0)=0$, $\nabla P(0)=0$, and the direction of $\xi_\lambda$ tends to the south pole as $\lambda\rightarrow +\infty$, when $\lambda$ is large enough there is a unique $(x_*,y_*)$
such that $\nabla P(x_*,y_*)\parallel (u_*,v_*)$. By rescaling
\eqref{eq:local-polynomial}, the point
\[
 p_\lambda:=\bigl(\lambda^{-1/(2m_0)}x_*,
     \lambda^{-1/(4m_0)}y_*,\lambda^{-1}P(x_*,y_*)\bigr)
\]
belongs to $\Gamma$ and tends to the origin. Therefore $
 G(p_\lambda)=\xi_\lambda/|\xi_\lambda|\in G^{\pm}(K)$ for all sufficiently large $\lambda$.

Now the change of variables
$(x,y)\mapsto(\lambda^{-1/(2m_0)}x,\lambda^{-1/(4m_0)}y)$ and
\eqref{eq:local-model-zero}\eqref{from-psi-to-a} give
\begin{equation}\label{eq:local-scaled-integral}
\begin{aligned}
 \widehat{\psi\,d\sigma}(\xi_\lambda)
 &=C\lambda^{-3/(4m_0)}
   \int e^{i(P(x,y)-u_*x-v_*y)}
   a(\lambda^{-1/(2m_0)}x,\lambda^{-1/(4m_0)}y)\,dx\,dy\\
 &=C\lambda^{-3/(4m_0)}
   \int e^{i(P(x,y)-u_*x-v_*y)}
   \bigl(a(\lambda^{-1/(2m_0)}x,\lambda^{-1/(4m_0)}y)-1\bigr)\,dx\,dy.
\end{aligned}
\end{equation}
Then we use the coordinates
\[
 (x,y)=(r\omega_1,r^{1/2}\omega_2),\qquad
 \omega\in\Sigma:=\{P=1\},\qquad
 dx\,dy=r^{1/2}\,dr\,d\nu(\omega),
\]
where $\Sigma$ is smooth and compact and $\nu$ is a smooth positive measure. Now the phase becomes
\[
 \phi(r,\omega)=r^{2m_0}-u_*r\omega_1-v_*r^{1/2}\omega_2,
\]
and integration by parts therefore gives the fast decay
\[
 |\widehat{\psi\,d\sigma}(\xi_\lambda)|
 \lesssim\lambda^{-3/(4m_0)}
 \int_{c\lambda^{1/(2m_0)}}^\infty r^{1/2-2m_0L}\,dr
 \lesssim_N\lambda^{-N}.
\]

\begin{proof}[The existence of $P_m$ and $(u_m,v_m)$]
The proof relies on the implicit function theorem on Banach spaces.

We first show the limit in \eqref{eq:local-model-zero} exists. For this purpose, define the entire function (recall \eqref{lower-bound-P-m})
\[
 \mathcal L_m(z_1,z_2)
 :=\int_{\R^2}e^{-P_m(x,y)-i(z_1x+z_2y)}\,dx\,dy.
\]
For positive real $s$, weighted dilation gives
\[
 \int e^{-sP_m(x,y)-i(ux+vy)}\,dx\,dy
 =s^{-3/(4m)}\mathcal L_m(s^{-1/(2m)}u,s^{-1/(4m)}v).
\]
Both sides are holomorphic in $\Re s>0$, with principal branches on the
right. Hence the identity holds throughout this half-plane. Letting
$s=\epsilon-i\to-i$ proves the existence of the limit in
\eqref{eq:local-model-zero} and gives
\begin{equation}\label{eq:local-complex-scaling}
 F_m(u,v)=e^{3\pi i/(8m)}
 \int e^{-P_m(x,y)}
 e^{-i(e^{i\pi/(4m)}ux+e^{i\pi/(8m)}vy)}\,dx\,dy.
\end{equation}

Then we show that
\begin{equation}\label{eq:local-C1-limit}
 F_m(u,v)\longrightarrow F_\infty(u,v)
 :=\int_{\{(x,y):y^2-1\le x\le1\}} e^{-i(ux+vy)}\,dx\,dy
\end{equation}
in $C^1$ on every compact rectangle of real parameters $(u,v)$.

Notice that $e^{-P_m}\rightarrow 1_{\{(x,y):y^2-1\le x\le1\}}$ almost everywhere. To justify convergence under the integral, notice that (consider $x\leq 0$, $0\leq 2x\leq y^2$, $2x\geq y^2$ separately)
$$|x|+y^2\leq 3\max\{|x+y^2/(2m)|, |x-y^2|\}.$$
So, when $|x|+y^2\geq 6$, the definition of $P_m$ in \eqref{eq:local-polynomial} implies that
$$P_m(x,y)\geq \frac{1}{2m}\left(\frac{|x|+y^2}{3}\right)^{2m}\gtrsim \left(\frac{|x|+y^2}{3}\right)^4,$$
where the implicit constant is independent of $m$. Therefore the $C^1$ convergence in \eqref{eq:local-C1-limit} follows from the dominated convergence theorem.

We next find a simple zero of $F_\infty$ in \eqref{eq:local-C1-limit}. Let
\begin{equation}\label{eq:local-limit-slice}
 A(v):=F_\infty(0,v)
 =\int_{-\sqrt2}^{\sqrt2}(2-y^2)e^{-ivy}\,dy
 =\frac{4\bigl(\sin(\sqrt2v)-\sqrt2v\cos(\sqrt2v)\bigr)}{v^3}.
\end{equation}
The choice $v_0=\rho/\sqrt2$, $\tan\rho=\rho$,
$\rho\in(\pi,3\pi/2)$, implies $A(v_0)=0$ and $A'(v_0)\ne0$.
Moreover,
\[
 \partial_uF_\infty(0,v)
 =-\frac i2\int_{-\sqrt2}^{\sqrt2}y^2(2-y^2)e^{-ivy}\,dy
 =\frac i2A''(v),
\]
and differentiating \eqref{eq:local-limit-slice} at $v_0$ gives
$A''(v_0)=-4A'(v_0)/v_0$. Thus
\begin{equation}\label{eq:local-simple-zero}
 \partial_uF_\infty(0,v_0)=-\frac{2i}{v_0}A'(v_0),\qquad
 \partial_vF_\infty(0,v_0)=A'(v_0).
\end{equation}

Now let $V$ be a bounded open rectangle containing $(0,v_0)$, let
$\mathcal K=\overline V$, and define
\[
 \Phi:C^1(\mathcal K;\R^2)\times V\longrightarrow\R^2,\qquad
 \Phi(h,u,v)=h(u,v)+\bigl(\Re F_\infty(u,v),\Im F_\infty(u,v)\bigr).
\]
This map is $C^1$: its derivative in $h$ is the evaluation map
$k\mapsto k(u,v)$, and
\[
 D_{u,v}\Phi
 =Dh(u,v)+
 \begin{pmatrix}
  \Re\partial_uF_\infty&\Re\partial_vF_\infty\\
  \Im\partial_uF_\infty&\Im\partial_vF_\infty
 \end{pmatrix}
\]
is continuous in $(h,u,v)$. By \eqref{eq:local-simple-zero},
$\Phi(0,0,v_0)=0$ and
\[
 \det D_{u,v}\Phi(0,0,v_0)
 =\det \begin{pmatrix}
  0&A'(v_0)\\
  -2A'(v_0)/v_0&0
 \end{pmatrix}=\frac{2}{v_0}A'(v_0)^2>0.
\]
The implicit function theorem on Banach spaces therefore gives a
neighborhood $\mathcal V$ of $0$ in $C^1(\mathcal K;\R^2)$ and a $C^1$ map
$(u(h),v(h)):\mathcal V\to V$ such that
\[
 (u(0),v(0))=(0,v_0) \text{ and }
 \Phi(h,u(h),v(h))=0,\ \forall\,h\in\mathcal V.
\]
By \eqref{eq:local-C1-limit}, we can take
\[
 h_m:=\bigl(\Re(F_m-F_\infty),\Im(F_m-F_\infty)\bigr)
 \longrightarrow0\quad\text{in }C^1(\mathcal K;\R^2).
\]
Hence, for all sufficiently large $m$, we may take
$(u_m,v_m):=(u(h_m),v(h_m))\to(0,v_0)$, and
\[
 0=\Phi(h_m,u_m,v_m)
   =\bigl(\Re F_m(u_m,v_m),\Im F_m(u_m,v_m)\bigr),
\]
which proves \eqref{eq:local-model-zero}.

It remains to verify convexity. Write $P_m(x,y)=H_m(x,y^2)$, where
\[
 H_m(x,t):=(x+\frac{t}{2m})^{2m}+\frac{1}{2m}(x-t)^{2m}.
\]
The function $H_m$ is convex because it is a sum of even powers of linear forms. Also
\[
 \partial_tH_m(x,t)
 =(x+\frac{t}{2m})^{2m-1}-(x-t)^{2m-1}\ge0\qquad(t\ge0).
\]
Therefore $P_m$ is convex. In fact, $\partial_tH_m(x,t)>0$ for $t>0$ and
$\partial_x^2H_m(x,t)>0$ unless $(x,t)=(0,0)$. The chain rule then shows
that $\nabla^2P_m$ is positive definite when $y\ne0$. On $y=0$,
$P_m(x,0)=(1+1/(2m))x^{2m}$ is strictly convex. Thus $P_m$ is strictly convex on $\R^2$. It is of finite type near the origin due to strict convexity and \eqref{lower-bound-P-m}.

\end{proof}

\bibliographystyle{plain}
\bibliography{mybibtex.bib}

\end{document}